\numberwithin{equation}{section}
\newtheorem{theorem}[equation]{Theorem} 
\newtheorem*{theorem*}{Theorem}
\newtheorem{lemma}[equation]{Lemma}
\newtheorem{proposition}[equation]{Proposition}
\newtheorem{corollary}[equation]{Corollary}
\newtheorem*{corollary*}{Corollary}
\theoremstyle{remark}
\newtheorem{example}[equation]{Example}
\newtheorem{notation}[equation]{Notation}
\theoremstyle{remark}
\newtheorem{remark}[equation]{Remark}
\newcommand{\ko}{\: , \;}
\newcommand{\cA}{{\mathcal A}}
\newcommand{\cB}{{\mathcal B}}
\newcommand{\cC}{{\mathcal C}}
\newcommand{\cD}{{\mathcal D}}
\newcommand{\cM}{{\mathcal M}}
\newcommand{\cO}{{\mathcal O}}
\newcommand{\cS}{{\mathcal S}}
\newcommand{\cT}{{\mathcal T}}
\newcommand{\Spt}{\mathsf{Sp}^\Sigma}
\newcommand{\add}{\mathsf{add}}
\newcommand{\loc}{\mathsf{loc}}
\newcommand{\bfA}{\mathbf{A}}
\newcommand{\bbD}{\mathbb{D}}
\newcommand{\bbF}{\mathbb{F}}
\newcommand{\bfL}{\mathbf{L}}
\newcommand{\bbS}{\mathbb{S}}
\newcommand{\bbQ}{\mathbb{Q}}
\newcommand{\bbZ}{\mathbb{Z}}
\DeclareMathOperator{\hocolim}{hocolim}
\DeclareMathOperator{\Id}{Id}
\DeclareMathOperator{\id}{id}
\DeclareMathOperator{\incl}{incl}
\DeclareMathOperator{\Mot}{Mot}
\DeclareMathOperator{\Fun}{Fun} 
\newcommand{\Sp}{\mathsf{Sp}} 
\newcommand{\bbK}{I\mspace{-6.mu}K}
\newcommand{\dgcat}{\mathsf{dgcat}}
\newcommand{\perf}{\mathrm{perf}}
\newcommand{\dgcatf}{\dgcat_{\mathsf{f}}}
\newcommand{\dg}{\mathsf{dg}}
\newcommand{\uHom}{\underline{\mathrm{Hom}}}
\newcommand{\Hom}{\mathrm{Hom}}
\newcommand{\Nat}{\mathrm{Nat}}
\newcommand{\HomC}{\uHom_{\,!}}
\newcommand{\rep}{\mathrm{rep}}
\newcommand{\Cat}{\mathsf{Cat}}
\newcommand{\CAT}{\mathsf{CAT}}
\newcommand{\dgHo}{\mathsf{H}^0}
\newcommand{\Ho}{\mathsf{Ho}}
\newcommand{\HO}{\mathsf{HO}}
\newcommand{\op}{\mathrm{op}}
\newcommand{\sSet}{\mathsf{sSet}}
\newcommand{\Map}{\mathrm{Map}}
\newcommand{\too}{\longrightarrow}
\newcommand{\dgS}{\cS}
\newcommand{\dgD}{\cD}
\newcommand{\ie}{\textsl{i.e.}\ }
\newcommand{\Madd}{\Mot_{\mathsf{add}}} 
\newcommand{\Mloc}{\Mot_{\mathsf{loc}}} 
\begin{document}

\title[${\bfA\!}^1$-homotopy theory of noncommutative motives]{${\bf A\!}^1$-homotopy theory of noncommutative motives}

\author{Gon{\c c}alo~Tabuada}

\address{Gon{\c c}alo Tabuada, Department of Mathematics, MIT, Cambridge, MA 02139, USA}
\email{tabuada@math.mit.edu}
\urladdr{http://math.mit.edu/~tabuada/}
\thanks{The author was partially supported by a NSF CAREER Award.}
\subjclass[2000]{14A22, 14C15, 16E20, 16E40, 19D35, 19D55, 19L10}
\date{\today}

\keywords{${\bf A\!}^1$-homotopy, noncommutative motives, algebraic $K$-theory, periodic cyclic homology, homotopy Chern characters, noncommutative algebraic geometry}

\abstract{In this article we continue the development of a theory of noncommutative motives, initiated in \cite{Additive}. We construct categories of ${\bf A\!}^1$-homotopy noncommutative motives, describe their universal properties, and compute their spectra of morphisms in terms of Karoubi-Villamayor's $K$-theory ($KV$) and Weibel's homotopy $K$-theory ($KH$). As an application, we obtain a complete classification of all the natural transformations defined on $KV, KH$. This leads to a streamlined construction of Weibel's homotopy Chern character from $KV$ to periodic cyclic homology. Along the way we extend Dwyer-Friedlander's {\'e}tale $K$-theory to the noncommutative world, and develop the universal procedure of forcing a functor to preserve filtered homotopy colimits.
}}

\maketitle 
\vskip-\baselineskip

\section{Introduction}
A {\em differential graded (=dg) category}, over a base commutative ring $k$, is a category enriched over complexes of $k$-modules; see \S\ref{sec:dg}. Every (dg) $k$-algebra $A$ gives naturally rise to a dg category with a single object. Another source of examples is provided by schemes since the derived category of perfect complexes $\perf(X)$ of every quasi-compact quasi-separated $k$-scheme $X$ admits a canonical dg enhancement $\perf_\dg(X)$; see Keller \cite[\S4.6]{ICM-Keller}. As explained in \S\ref{sec:dg}, the category $\dgcat$ of (small) dg categories carries a Quillen model structure. Consequently, we obtain a well-defined Grothendieck derivator $\HO(\dgcat)$; consult Appendix \ref{sub:derivators}. A morphism of  derivators $E:\HO(\dgcat) \to \bbD$, with values in a triangulated derivator, is called:
\begin{itemize}
\item[(i)] {\em ${\bf A\!}^1$-homotopy invariant} if it inverts the dg functors $\cA \to \cA[t]:=\cA \otimes k[t]$;
\item[(ii)] {\em Additive} if it preserves filtered homotopy colimits and sends split short exact sequences of dg categories (see \cite[\S13]{Additive}) to direct sums
\begin{eqnarray*}
\xymatrix@C=1.7em{
0 \ar[r] & \cA \ar[r] & \cB \ar@/_0.5pc/[l]
\ar[r] & \cC \ar@/_0.5pc/[l] \ar[r]& 0
} & \mapsto & E(\cA) \oplus E(\cC) \simeq E(\cB)\,;
\end{eqnarray*}
\item[(iii)] {\em Localizing} if it preserves filtered homotopy colimits and sends short exact sequences of dg categories (see \cite[\S9]{Additive}) to distinguished triangles
\begin{eqnarray*}
0 \too \cA \too \cB \too \cC \too 0 & \mapsto & E(\cA) \to E(\cB) \to E(\cC) \to \Sigma E(\cA)\,.
\end{eqnarray*}
\end{itemize}
Clearly (iii) $\Rightarrow$ (ii). When $E$ satisfies (i)-(ii), resp. (i) and (iii), we call it an {\em ${\bf A\!}^1$-additive invariant}, resp. an {\em ${\bf A\!}^1$-localizing invariant}. Here are some examples:
\begin{example}{(Karoubi-Villamayor's $K$-theory)}
Karoubi and Villamayor introduced in \cite{KV,KV1} the algebraic $K$-theory groups $KV_n, n \geq 1$, of rings. In \S\ref{sub:KV} we construct the spectral enhancement $KV$ of these groups as well as its mod-$l$ variant $KV(-;\bbZ/l)$. These are examples of ${\bf A\!}^1$-additive invariants.
\end{example}
\begin{example}{(Weibel's homotopy $K$-theory)}
Weibel introduced in \cite{Weibel-KH} the algebraic $K$-theory groups $KH_n, n \in \bbZ$, of rings and schemes. In \S\ref{sub:KH} we extend these constructions to dg categories and introduce also the mod-$l$ variant $KH(-;\bbZ/l)$. These are examples of ${\bf A\!}^1$-localizing invariants.
\end{example}
\begin{example}{(Dwyer-Friedlander's {\'e}tale $K$-theory)}
Dwyer and Friedlander introduced in \cite{etale1,etale2} (see also \cite{Friedlander1,Friedlander2}) the {\'e}tale $K$-theory of schemes. In \S\ref{sub:etale}, making use of Thomason's work \cite{Thomason}, we extend this construction to (the noncommutative setting of) dg categories. This is an example of an ${\bf A\!}^1$-localizing invariant.
\end{example}
\begin{example}{(Periodic cyclic homology)}
Goodwillie (resp. Weibel) introduced in \cite{Goodwillie} (resp. in \cite{Weibel-cyclic}) the periodic cyclic homology of rings (resp. of schemes). In \S\ref{sec:HP} we extend these constructions to dg categories. As proved in Proposition~\ref{prop:HP-invariant}, the morphism of derivators obtained $HP:\HO(\dgcat) \to \HO(\Sp)$ (with values in spectra) is ${\bf A\!}^1$-homotopy invariant whenever $k$ is a field of characteristic zero. However, since periodic cyclic homology is defined using infinite products, $HP$ does {\em not} preserve filtered homotopy colimits. Consequently, $HP$ is {\em not} an ${\bf A\!}^1$-additive invariant. Making use of a universal construction of independent interest (see Proposition~\ref{prop:general}), we obtain nevertheless an ${\bf A\!}^1$-additive invariant $HP^{\mathrm{flt}}$ and a $2$-morphism $\epsilon: HP^{\mathrm{flt}} \Rightarrow HP$ whose evaluation at every  homotopically finitely presented dg category (see \S\ref{sub:smooth}) is an isomorphism. 
\end{example}
In this article we study the above properties (i)-(iii) from a motivic viewpoint.
\section{Statement of results}
\begin{theorem}\label{thm:main}
There exist morphisms of derivators 
\begin{eqnarray*}
U^{{\bf A\!}^1}_\add: \HO(\dgcat) \too \Mot^{{\bf A\!}^1}_\add && U^{{\bf A\!}^1}_\loc: \HO(\dgcat) \too \Mot^{{\bf A\!}^1}_\loc 
\end{eqnarray*} 
characterized by the following universal property: given any triangulated derivator $\bbD$ one has induced equivalences
\begin{eqnarray}
(U^{{\bf A\!}^1}_\add)^{\ast}:\ \HomC(\Mot^{{\bf A\!}^1}_\add, \bbD) & \stackrel{\sim}{\too} & \uHom_{\add,{\bf A\!}^1}(\HO(\dgcat), \bbD) \label{eq:equiv-UAdd-main} \\
(U^{{\bf A\!}^1}_\loc)^{\ast}:\ \HomC(\Mot^{{\bf A\!}^1}_\loc, \bbD) & \stackrel{\sim}{\too} & \uHom_{\loc,{\bf A\!}^1}(\HO(\dgcat), \bbD)\,, \label{eq:equiv-ULoc-main}
\end{eqnarray}
where the left-hand-sides denote the categories of homotopy colimit preserving morphisms of derivators and the right-hand-sides the categories of ${\bf A\!}^1$-additive/localizing invariants. Moreover, $\Madd^{{\bf A\!}^1}$ (resp. $\Mloc^{{\bf A\!}^1}$) carries an homotopy colimit preserving closed symmetric monoidal structure which makes $U_\add^{{\bf A\!}^1}$ (resp. $U_\loc^{{\bf A\!}^1}$) symmetric monoidal and which gives rise to a $\otimes$-enhancement of \eqref{eq:equiv-UAdd-main} (resp. of \eqref{eq:equiv-ULoc-main}). 
\end{theorem}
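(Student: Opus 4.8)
The plan is to construct $\Madd^{\bfA^{\!1}}$ and $\Mloc^{\bfA^{\!1}}$ by a two-step localization of the known universal additive/localizing motivator $\Madd$ (resp. $\Mloc$) from \cite{Additive}, and then to check that the required universal property and monoidal structure survive the localization. First I would recall the fundamental construction of \loccit: there is a morphism of derivators $\Uadd:\HO(\dgcat)\to\Madd$ (resp. $\Uloc:\HO(\dgcat)\to\Mloc$) which is the universal additive (resp. localizing) invariant, with $\Madd$ and $\Mloc$ triangulated derivators carrying homotopy colimit preserving closed symmetric monoidal structures for which $\Uadd$ (resp. $\Uloc$) is symmetric monoidal. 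The point is that ``$\bfA^{\!1}$-additive invariant'' = ``additive invariant that in addition inverts the dg functors $\cA\to\cA[t]$'', and the latter is a \emph{further} localization condition of exactly the type the theory of derivators is designed to handle.

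Next I would perform the Bousfield-type localization at the $\bfA^{\!1}$-equivalences. Concretely, inside the triangulated derivator $\Madd$ (resp. $\Mloc$) consider the localizing, $\otimes$-ideal subderivator generated by the cones of the maps $\Uadd(\cA)\to\Uadd(\cA[t])$ as $\cA$ ranges over (a set of generators of) $\HO(\dgcat)$ — these cones form a \emph{set} because $\HO(\dgcat)$ is compactly generated by the strictly finite dg cells. Using the standard machinery (Bousfield localization of compactly generated triangulated derivators, as in the appendix on derivators and as used already in \cite{Additive} to build $\Madd,\Mloc$ themselves from $\HO(\dgcat)$), this produces a triangulated derivator $\Madd^{\bfA^{\!1}}$ (resp. $\Mloc^{\bfA^{\!1}}$) together with a homotopy colimit preserving localization morphism $\Madd\to\Madd^{\bfA^{\!1}}$, and I set $U^{\bfA^{\!1}}_\add$ to be the composite $\HO(\dgcat)\xrightarrow{\Uadd}\Madd\to\Madd^{\bfA^{\!1}}$ (similarly for $\loc$). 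By construction $U^{\bfA^{\!1}}_\add$ is $\bfA^{\!1}$-homotopy invariant and additive, hence an $\bfA^{\!1}$-additive invariant.

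For the universal property \eqref{eq:equiv-UAdd-main}, I would combine two adjunctions. Precomposition with $\Uadd$ gives, by \cite{Additive}, an equivalence $\HomC(\Madd,\bbD)\xrightarrow{\sim}\uHom_{\add}(\HO(\dgcat),\bbD)$; and for any Bousfield localization $\Madd\to\Madd^{\bfA^{\!1}}$ at a set of maps $S$, precomposition gives an equivalence from $\HomC(\Madd^{\bfA^{\!1}},\bbD)$ onto the full subcategory of $\HomC(\Madd,\bbD)$ of homotopy colimit preserving morphisms that send $S$ to isomorphisms. Chasing the generating maps $\Uadd(\cA)\to\Uadd(\cA[t])$ through this identification, that subcategory corresponds exactly to the additive invariants $\HO(\dgcat)\to\bbD$ that invert $\cA\to\cA[t]$, i.e. to $\uHom_{\add,\bfA^{\!1}}(\HO(\dgcat),\bbD)$; this yields \eqref{eq:equiv-UAdd-main}, and the identical argument with $\loc$ in place of $\add$ yields \eqref{eq:equiv-ULoc-main}. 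For the monoidal statement, since the localizing subderivator we quotient by is a $\otimes$-ideal, the symmetric monoidal structure on $\Madd$ descends to $\Madd^{\bfA^{\!1}}$ with the localization morphism symmetric monoidal (standard $\otimes$-localization); hence $U^{\bfA^{\!1}}_\add$ is symmetric monoidal, it is closed because it is a homotopy colimit preserving monoidal structure on a compactly generated derivator, and the $\otimes$-enhancement of \eqref{eq:equiv-UAdd-main} follows by running the same two adjunctions in the symmetric monoidal setting (lax monoidal homotopy colimit preserving morphisms).

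The main obstacle I anticipate is purely set-theoretic/foundational: verifying that the class of $\bfA^{\!1}$-equivalences is generated, as a localizing $\otimes$-ideal, by a \emph{set} of morphisms, so that the Bousfield localization of derivators actually exists. This reduces to the compact generation of $\HO(\dgcat)$ by strictly finite dg cells together with the fact that $\cA\otimes k[t]$ can be controlled on these generators; once that is in place, everything else is a formal consequence of the universal properties already available in \cite{Additive} and the standard calculus of Bousfield localization for triangulated derivators.
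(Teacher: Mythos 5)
Your construction is essentially the paper's: there too, $\Mloc^{\bfA^{\!1}}$ (resp.\ $\Madd^{\bfA^{\!1}}$) is obtained as a left Bousfield localization of the universal localizing (resp.\ additive) motivator of \cite{Additive} --- concretely, of its Quillen model $L_{\loc}\!\Fun(\dgcat_f^\op,\Sp)$ at the explicit set $\mathsf{S}=\{\Omega^n(U_\loc(\cB\to\cB[t]))\mid \cB\ \text{finite dg cell},\ n\geq 0\}$ --- one sets $U^{\bfA^{\!1}}_\loc:=l_!\circ U_\loc$, and the universal property follows by composing the localization equivalence $(l_!)^\ast$ with a restriction of $(U_\loc)^\ast$, exactly as in your two-adjunction argument; the monoidal structure is likewise descended through the localization and closedness obtained by representability.

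Two caveats. First, the step you dismiss as ``chasing the generating maps through the identification'' is where the actual mathematical content sits, and it is not formal: a homotopy colimit preserving morphism on $\Mloc$ inverting $U_\loc(\cB\to\cB[t])$ only for the generators $\cB$ must be shown to yield an invariant inverting $\cA\to\cA[t]$ for \emph{every} dg category $\cA$. The paper isolates this as Lemma~\ref{lem:keyfilt}: since $k[t]$ is flat, $-\otimes k[t]$ preserves filtered homotopy colimits, so $(\cA\to\cA[t])\simeq \hocolim_j(\cB_j\to\cB_j[t])$ for a filtered system of finite dg cells, and together with the fact that $U_\loc$ preserves filtered homotopy colimits this is what makes the restricted equivalence land exactly in $\uHom_{\loc,\bfA^{\!1}}(\HO(\dgcat),\bbD)$. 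You gesture at this (``$\cA\otimes k[t]$ can be controlled on these generators'') but file it under set-theoretic foundations; in fact the existence of the localization is unproblematic (a left proper cellular model localized at an honest set --- no $\otimes$-ideal set-generation is needed), whereas the flatness/filtered-colimit lemma is the input your chase silently uses. Second, to get the universal property as stated you should localize at the set $\mathsf{S}$ itself rather than at the $\otimes$-ideal it generates; otherwise you must check the ideal inverts no more than $\mathsf{S}$ does. Either way one needs the computation $U_\loc(\cA)\otimes^\bfL\Omega^n(U_\loc(\cB\to\cB[t]))\simeq \Omega^n U_\loc\big((\cA\otimes^\bfL\cB)\to(\cA\otimes^\bfL\cB)[t]\big)$, an $\mathsf{S}$-local equivalence again by the lemma; this is precisely the paper's verification that $N\otimes^\bfL-$ preserves $\mathsf{S}$-local equivalences, which is what lets the symmetric monoidal structure descend. (For closedness the paper invokes well-generatedness of the combinatorial model via Rosicky and Neeman's representability theorem, rather than compact generation, but your variant is repairable along the same lines.)
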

Roughly speaking, Theorem~\ref{thm:main} shows that an ${\bf A\!}^1$-additive (resp. ${\bf A\!}^1$-localizing) invariant is the same data as an homotopy colimit preserving morphism of derivators defined on $\Madd^{{\bf A\!}^1}$ (resp. $\Mloc^{{\bf A}^1}$). Because of these universal properties, which are reminiscent from motives, the base categories of $\Mot_\add^{{\bf A\!}^1}$ and $\Mot_\loc^{{\bf A\!}^1}$ (see Appendix \ref{sub:derivators}) are called the triangulated categories of {\em ${\bf A\!}^1$-homotopy noncommutative motives}.

Given an object $\cO$ in a triangulated category $\cT$ and an integer $l \geq 2$, let $\cdot l$ be the $l$-fold multiple of the identity of $\cO$ and $l\backslash \cO$ the fiber of $\cdot l$. As any triangulated derivator, $\Mot_\add^{\bfA^{\!1}}$ and $\Mot_\loc^{\bfA^{\!1}}$ are naturally enriched $\Hom_\Sp(-,-)$ over spectra.
\begin{theorem}\label{thm:main2}
Let $\cA$ and $\cB$ be two dg categories, with $\cA$ smooth and proper (see \S\ref{sub:smooth}). Under these assumptions, we have the following weak equivalences of spectra
\begin{eqnarray}
\Hom_\Sp(U_\add^{{\bf A\!}^1}(\cA), U_\add^{{\bf A\!}^1}(\cB))  &\simeq&  KV(\cA^\op \otimes^\bfL \cB) \nonumber\\ 
\Hom_\Sp(l \backslash U_\add^{{\bf A\!}^1}(\cA), U_\add^{{\bf A\!}^1}(\cB))  &\simeq& KV(\cA^\op \otimes^\bfL\cB;\bbZ/l) \nonumber\\
\Hom_\Sp(U_\loc^{{\bf A\!}^1}(\cA), U_\loc^{{\bf A\!}^1}(\cB)) & \simeq &  KH(\cA^\op \otimes^\bfL \cB) \label{eq:star-3}\\
\Hom_\Sp(l\backslash U_\loc^{{\bf A\!}^1}(\cA), U_\loc^{{\bf A\!}^1}(\cB))  & \simeq & KH(\cA^\op \otimes^\bfL \cB;\bbZ/l) \label{eq:star-4}\,.
\end{eqnarray}
\end{theorem}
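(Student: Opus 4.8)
The plan is to reduce Theorem~\ref{thm:main2} to the computation of mapping spectra in the ordinary categories of noncommutative motives $\Mot_\add$ and $\Mot_\loc$ (\cite{Additive} and its localizing counterpart) by analysing the localization functor that passes to the ${\bf A\!}^1$-homotopy setting. By the construction of $\Mot^{{\bf A\!}^1}_\add$ (equivalently, by comparing the universal property of Theorem~\ref{thm:main} with the one of $\Mot_\add$), the category $\Mot^{{\bf A\!}^1}_\add$ is the left Bousfield localization of $\Mot_\add$ along the set of morphisms $\{U_\add(\cA)\to U_\add(\cA[t])\}_\cA$, the functor $U^{{\bf A\!}^1}_\add$ is the composite of $U_\add$ with the localization functor, and similarly for $\loc$. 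Writing $L$ for the induced local-replacement endofunctor of $\Mot_\add$ (the localization functor followed by the fully faithful inclusion of the ${\bf A\!}^1$-local objects), the localization adjunction yields, for all dg categories $\cA,\cB$, a weak equivalence of spectra
\begin{equation*}
\Hom_\Sp\bigl(U^{{\bf A\!}^1}_\add(\cA),\,U^{{\bf A\!}^1}_\add(\cB)\bigr)\;\simeq\;\Hom_\Sp\bigl(U_\add(\cA),\,L\,U_\add(\cB)\bigr)\,,
\end{equation*}
and likewise with $\add$ replaced by $\loc$.

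The crux is then to identify this local replacement with the ``algebraic simplex'' (Suslin--Voevodsky) realization: for every dg category $\cC$,
\begin{equation*}
L\,U_\add(\cC)\;\simeq\;\bigl|\,[n]\mapsto U_\add\bigl(\cC\otimes k[\Delta^n]\bigr)\,\bigr|\,,\qquad k[\Delta^n]:=k[t_0,\dots,t_n]/\left(\sum_i t_i-1\right)\,,
\end{equation*}
and similarly with $\add$ replaced by $\loc$. One half is straightforward: the canonical morphism from $U_\add(\cC)$ to this geometric realization is an ${\bf A\!}^1$-equivalence, because --- using the identification $\cD\otimes k[\Delta^n]=(\cD\otimes k[\Delta^{n-1}])[t_n]$ --- in each simplicial degree the structure morphism $U_\add(\cC)\to U_\add(\cC\otimes k[\Delta^n])$ is a finite composite of morphisms of the form $U_\add(\cD)\to U_\add(\cD[t])$, and a levelwise ${\bf A\!}^1$-equivalence of simplicial objects realizes to an ${\bf A\!}^1$-equivalence. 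The other half --- that the realization is already ${\bf A\!}^1$-local --- is the analogue of Voevodsky's lemma; it is established, as in Morel--Voevodsky, by the standard simplicial-homotopy argument extracted from the triangulation of the prism $\Delta^1\times\Delta^n$ into $n+1$ copies of $\Delta^{n+1}$, together with the observation that $U_\add(k[t])$ is a compact object of $\Mot_\add$, which permits commuting the relevant mapping objects past the realization. I expect this identification to be the main obstacle.

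Granting it, Theorem~\ref{thm:main2} follows formally. Since $\cA$ is smooth and proper, $U_\add(\cA)$ is a compact object of $\Mot_\add$, so that $\Hom_\Sp(U_\add(\cA),-)$ commutes with geometric realizations; combining the two displays above then gives
\begin{align*}
\Hom_\Sp\bigl(U^{{\bf A\!}^1}_\add(\cA),U^{{\bf A\!}^1}_\add(\cB)\bigr)
&\simeq\bigl|\,[n]\mapsto\Hom_\Sp\bigl(U_\add(\cA),U_\add(\cB\otimes k[\Delta^n])\bigr)\,\bigr|\\
&\simeq\bigl|\,[n]\mapsto K\bigl(\cA^\op\otimes^\bfL(\cB\otimes k[\Delta^n])\bigr)\,\bigr|\\
&\simeq\bigl|\,[n]\mapsto K\bigl((\cA^\op\otimes^\bfL\cB)\otimes k[\Delta^n]\bigr)\,\bigr|\;\simeq\;KV\bigl(\cA^\op\otimes^\bfL\cB\bigr)\,,
\end{align*}
where the second weak equivalence is the identification of mapping spectra in $\Mot_\add$ with connective algebraic $K$-theory $K$ (\cite{Additive}), the third uses that $k[\Delta^n]$ is flat over $k$, and the last is the definition of $KV$ for dg categories from \S\ref{sub:KV}. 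Running the same argument with $\Mot_\loc$ in place of $\Mot_\add$ and non-connective algebraic $K$-theory in place of connective algebraic $K$-theory, and invoking the definition of $KH$ from \S\ref{sub:KH}, proves \eqref{eq:star-3}.

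It remains to deduce the mod-$l$ statements, which are purely formal. For every object $\cO$ one has $l\backslash\cO=\Sigma^{-1}\mathrm{cone}(\cO\xrightarrow{\cdot l}\cO)$, so applying $\Hom_\Sp(-,U^{{\bf A\!}^1}_\add(\cB))$ to the distinguished triangle $l\backslash U^{{\bf A\!}^1}_\add(\cA)\to U^{{\bf A\!}^1}_\add(\cA)\xrightarrow{\cdot l}U^{{\bf A\!}^1}_\add(\cA)$ identifies $\Hom_\Sp(l\backslash U^{{\bf A\!}^1}_\add(\cA),U^{{\bf A\!}^1}_\add(\cB))$ with the cone of multiplication by $l$ on $\Hom_\Sp(U^{{\bf A\!}^1}_\add(\cA),U^{{\bf A\!}^1}_\add(\cB))$. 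Under the weak equivalence established above this becomes the cone of multiplication by $l$ on $KV(\cA^\op\otimes^\bfL\cB)$, which is by definition $KV(\cA^\op\otimes^\bfL\cB;\bbZ/l)$; the localizing case, with $KH$ in place of $KV$, is identical.
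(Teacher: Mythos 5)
Your outer reductions are sound and, in fact, coincide with the paper's own bookkeeping: the localization adjunction $l_!\dashv l^\ast$ identifies $\Hom_\Sp(U_\loc^{{\bf A\!}^1}(\cA),U_\loc^{{\bf A\!}^1}(\cB))$ with maps from $U_\loc(\cA)$ into the ${\bf A\!}^1$-local replacement of $U_\loc(\cB)$; compactness of $U_\loc(\cA)$ for $\cA$ smooth and proper, the co-representability $\Hom_\Sp(U_\loc(\cA),U_\loc(\cE))\simeq\bbK(\cA^\op\otimes^\bfL\cE)$, flatness of $\Delta_n$, and the Moore-object formalities for the mod-$l$ statements are exactly the ingredients used in \S\ref{sec:proof-main2}. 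Your ``half 1'' is also fine (modulo noting, as in Lemma~\ref{lem:keyfilt}, that $U(\cD)\to U(\cD[t])$ is a local equivalence for \emph{arbitrary} $\cD$, not just finite dg cells). The genuine problem is the step you yourself flag as the main obstacle: the claim that the realization $\hocolim_n U(\cC\otimes\Delta_n)$ is ${\bf A\!}^1$-local. This is precisely the content of the paper's key Lemma~\ref{lem:auxiliar} (the identification $l^\ast\simeq\overline{U^h_\loc}$), and in your write-up it is not proved but transferred by analogy with Morel--Voevodsky. The two ingredients you name do not carry the argument: compactness of $U_\add(k[t])$ (what is actually needed is compactness of $U(\cB)$ and $U(\cB\otimes k[t])$ for every finite dg cell $\cB$) only lets you commute $\Hom_\Sp$ past the realization, and the prism triangulation of $\Delta^1\times\Delta^n$ is the simplicial-sets mechanism; here locality lives in the \emph{contravariant} variable. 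Concretely, after co-representability one must show that restriction along $\cB\to\cB[t]$ induces an equivalence on $\hocolim_n K(\rep_\dg(-,\cC\otimes\Delta_n))$, with inverse induced by $ev_0$, and the required simplicial homotopy between the identity and $(\iota\circ ev_0)^\ast$ has to be constructed at the level of bimodules, by rescaling the $t$-action by $t_{j+1}+\cdots+t_{n+1}$ (the bimodule analogue of \eqref{eq:simplicial-homotopy}), checking well-definedness, exactness, preservation of the perfectness condition, and the simplicial identities. None of this appears in your proposal; it is work of the same order as the lemma it replaces, so as written the proof has a hole exactly at its crux.

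For comparison, the paper closes this step by a different mechanism: using the symmetric monoidal enhancement from Theorem~\ref{thm:main}, it shows the ${\bf A\!}^1$-localization is \emph{smashing}, $l^\ast l_!(-)\simeq -\otimes^\bfL\hocolim_n U_\loc(\Delta_n)$, with idempotence reduced to cofinality of the maps $\Delta\to\Delta\times\Delta$, and then compares this Bousfield localization with $l_!$; no contracting homotopy is needed. Your route is viable and closer in spirit to Suslin--Voevodsky, but to make it a proof you must either carry out the bimodule-level homotopy sketched above or invoke (equivalently, reprove) the smashing description.
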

Note that the left-hand-sides of Theorem~\ref{thm:main2} are defined solely in terms of universal properties (algebraic $K$-theory is never mentioned). Therefore, Theorem~\ref{thm:main2} provides a simple conceptual characterization of Karoubi-Villamayor and Weibel's $K$-theories. Roughly speaking, these $K$-theories are the functors co-represented by the $\otimes$-unit of the categories of ${\bf A\!}^1$-homotopy noncommutative motives. Note also that Theorem~\ref{thm:main2} combined with Theorem \ref{thm:main} implies that $\Madd^{{\bf A\!}^1}$ (resp. $\Mloc^{{\bf A\!}^1}$) is enriched over $KV(k)$-modules (resp. $KH(k)$-modules).
\begin{corollary}\label{cor:main}
Let $X$ and $Y$ be quasi-compact quasi-separated $k$-schemes, with $X$ smooth and proper, and $Y$ (or $X$) $k$-flat. Under these assumptions, we have
\begin{equation*}
\Hom_\Sp(U_\loc^{{\bf A\!}^1}(\perf_\dg(X)),U_\loc^{{\bf A\!}^1}(\perf_\dg(Y)))  \simeq KH(X\times Y)\,.
\end{equation*}
\end{corollary}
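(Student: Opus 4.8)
The plan is to obtain Corollary~\ref{cor:main} as the special case $\cA=\perf_\dg(X)$, $\cB=\perf_\dg(Y)$ of the weak equivalence \eqref{eq:star-3} in Theorem~\ref{thm:main2}. Three ingredients need to be assembled: (a)~when $X$ is smooth and proper over $k$, the dg category $\perf_\dg(X)$ is smooth and proper in the sense of \S\ref{sub:smooth}; (b)~under the $k$-flatness hypothesis there is a Morita equivalence $\perf_\dg(X)^\op \otimes^\bfL \perf_\dg(Y) \simeq \perf_\dg(X\times Y)$; and (c)~for every quasi-compact quasi-separated $k$-scheme $Z$ one has $KH(\perf_\dg(Z))\simeq KH(Z)$, where the left-hand side is the dg-categorical invariant constructed in \S\ref{sub:KH} and the right-hand side is Weibel's homotopy $K$-theory of the scheme.

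Item (c) holds essentially by the construction of $KH$ on dg categories carried out in \S\ref{sub:KH}. Item (a) is standard: smoothness of $\perf_\dg(X)$ amounts to the perfectness of the structure sheaf of the diagonal in $\perf(X\times_k X)$, and properness to the finiteness of the total cohomology of the mapping complexes $\mathrm{R}\Hom_{\cO_X}(\cE,\cF)$ for $\cE,\cF$ perfect; both follow from $X$ being smooth and proper over $k$. For item (b), first observe that the derived dual $\cE\mapsto \mathrm{R}\mathcal{H}om_{\cO_X}(\cE,\cO_X)$ exhibits $\perf_\dg(X)^\op$ as Morita equivalent to $\perf_\dg(X)$ itself, since the perfect complexes are precisely the dualizable objects of the derived category of quasi-coherent sheaves on $X$; and since $\otimes^\bfL$ descends to Morita equivalence classes, it suffices to identify $\perf_\dg(X)\otimes^\bfL\perf_\dg(Y)$ with $\perf_\dg(X\times Y)$. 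The external tensor product $(\cE,\cF)\mapsto p_X^\ast\cE \otimes^\bfL p_Y^\ast\cF$ induces a dg functor $\perf_\dg(X)\otimes^\bfL\perf_\dg(Y)\to\perf_\dg(X\times Y)$, which one checks to be a Morita equivalence by verifying that it is quasi-fully-faithful and that its image generates $\perf(X\times Y)$ as a thick subcategory. The $k$-flatness of $Y$ (or of $X$) is exactly what makes the base-change identity $\mathrm{R}\Hom(p_X^\ast\cE \otimes^\bfL p_Y^\ast\cF,\, p_X^\ast\cE'\otimes^\bfL p_Y^\ast\cF') \simeq \mathrm{R}\Hom_{\cO_X}(\cE,\cE')\otimes^\bfL_k \mathrm{R}\Hom_{\cO_Y}(\cF,\cF')$ valid; this is the derived Künneth formula (Toën).

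Granting (a)--(c) the proof is immediate: by (a) the dg category $\cA=\perf_\dg(X)$ is smooth and proper, so \eqref{eq:star-3} yields
\[
\Hom_\Sp\big(U_\loc^{{\bf A\!}^1}(\perf_\dg(X)),\, U_\loc^{{\bf A\!}^1}(\perf_\dg(Y))\big)\ \simeq\ KH\big(\perf_\dg(X)^\op \otimes^\bfL \perf_\dg(Y)\big);
\]
by (b) the right-hand side is $KH(\perf_\dg(X\times Y))$, and by (c) this is $KH(X\times Y)$. I expect the only genuine obstacle to be item (b): one must compute $\otimes^\bfL$ of dg categories with suitably cofibrant (or Morita-fibrant) models, and both the essential surjectivity of the external product onto the thick closure and the precise role of flatness in the Künneth identity require real input from the theory of derived categories of schemes rather than formal manipulation. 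Everything else is bookkeeping on top of Theorem~\ref{thm:main2}.
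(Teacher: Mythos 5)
Your proposal is correct and follows essentially the same route as the paper: apply \eqref{eq:star-3} with $\cA=\perf_\dg(X)$ and $\cB=\perf_\dg(Y)$ (using that $\perf_\dg(X)$ is smooth and proper), identify $\perf_\dg(X)^\op\otimes^\bfL\perf_\dg(Y)$ with $\perf_\dg(X\times Y)$ via $\perf_\dg(X)^\op\simeq\perf_\dg(X)$ and the $k$-flatness hypothesis, and conclude by the agreement of $KH$ with Weibel's homotopy $K$-theory of schemes. The only minor differences are that the paper performs the middle identification at the level of the localizing invariant $KH$ by citing \cite[Proposition~8.2]{Regularity} rather than invoking To\"en's K\"unneth Morita equivalence $\perf_\dg(X)\otimes^\bfL\perf_\dg(Y)\simeq\perf_\dg(X\times Y)$, and that your item (c) is not quite ``by construction'' but is precisely Proposition~\ref{prop:agreementKH}(ii), whose proof requires Thomason--Trobaugh \v{C}ech descent.
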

\section{Applications}
Our main application is the following (complete) classification result:
\begin{theorem}\label{thm:main22}
Given any ${\bf A\!}^1$-additive invariant $E$, with values in $\HO(\Sp)$, one has 
\begin{eqnarray}\label{eq:nat-1}
\mathrm{Nat}_\Sp(KV,E) \simeq E(k) &\mathrm{and}& \mathrm{Nat}(KV,E) \simeq E_0(k)\,,
\end{eqnarray}
where $\mathrm{Nat}_\Sp$ is the spectrum of natural transformations and $\Nat:= \pi_0\Nat_\Sp$. The same holds for ${\bf A\!}^1$-localizing invariants $E$ when $KV$ is replaced by $KH$.
\end{theorem}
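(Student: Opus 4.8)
The plan is to reduce the statement --- via the universal property of Theorem~\ref{thm:main} together with the co-representability of $KV$ extracted from Theorem~\ref{thm:main2} --- to an enriched Yoneda lemma inside the category of ${\bf A\!}^1$-homotopy noncommutative motives. Consider the ${\bf A\!}^1$-additive case first. By Theorem~\ref{thm:main} the functor $(U^{{\bf A\!}^1}_\add)^{\ast}$ is an equivalence between $\HomC(\Mot^{{\bf A\!}^1}_\add, \HO(\Sp))$ and the category of ${\bf A\!}^1$-additive invariants with values in $\HO(\Sp)$; since the target derivator is the one of spectra, both of these Hom-categories carry a natural spectral enrichment (the $\otimes$-enhancement of \eqref{eq:equiv-UAdd-main} provides in particular the mapping spectra $\Nat_\Sp$), and the above equivalence respects it. Writing $\bar E$ and $\overline{KV}$ for the homotopy colimit preserving morphisms $\Mot^{{\bf A\!}^1}_\add \to \HO(\Sp)$ that correspond to $E$ and to $KV$, so that $E \simeq \bar E \circ U^{{\bf A\!}^1}_\add$ and $KV \simeq \overline{KV} \circ U^{{\bf A\!}^1}_\add$, we obtain $\Nat_\Sp(KV, E) \simeq \Nat_\Sp(\overline{KV}, \bar E)$.

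The second step is to identify $\overline{KV}$ with the functor co-represented by the $\otimes$-unit. The dg category $k$ (with a single object) is smooth and proper and $k^\op \otimes^\bfL \cB \simeq \cB$, so the first weak equivalence of Theorem~\ref{thm:main2} specializes to $\Hom_\Sp(U^{{\bf A\!}^1}_\add(k), U^{{\bf A\!}^1}_\add(\cB)) \simeq KV(\cB)$, naturally in $\cB$. By the monoidal part of Theorem~\ref{thm:main} the object $\mathbf{1} := U^{{\bf A\!}^1}_\add(k)$ is the $\otimes$-unit of $\Mot^{{\bf A\!}^1}_\add$; hence $KV \simeq \Hom_\Sp(\mathbf{1}, -) \circ U^{{\bf A\!}^1}_\add$ and, by the uniqueness clause of the universal property, $\overline{KV} \simeq \Hom_\Sp(\mathbf{1}, -)$. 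As $\mathbf{1}$ is a compact object of $\Mot^{{\bf A\!}^1}_\add$ --- it is $U^{{\bf A\!}^1}_\add$ of a strictly finite dg cell --- the morphism $\Hom_\Sp(\mathbf{1}, -)$ preserves homotopy colimits and so genuinely lies in $\HomC(\Mot^{{\bf A\!}^1}_\add, \HO(\Sp))$.

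It remains to establish the Yoneda-type identification $\Nat_\Sp(\Hom_\Sp(\mathbf{1}, -), \bar E) \simeq \bar E(\mathbf{1})$. For this I would invoke that $\Mot^{{\bf A\!}^1}_\add$ is compactly generated by the objects $U^{{\bf A\!}^1}_\add(\cA)$ with $\cA$ a strictly finite dg cell (hence in particular by $\mathbf{1}$), that restriction to the small full subcategory spanned by these compact generators identifies homotopy colimit preserving morphisms to $\HO(\Sp)$ with arbitrary spectrally enriched morphisms out of it, and finally the ordinary (spectrally enriched) Yoneda lemma on that small category applied to the co-representable presheaf $\Hom(\mathbf{1}, -)$. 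Combining the three steps yields $\Nat_\Sp(KV, E) \simeq \bar E(\mathbf{1}) = \bar E(U^{{\bf A\!}^1}_\add(k)) = E(k)$, and passing to $\pi_0$ gives $\Nat(KV, E) \simeq \pi_0 E(k) = E_0(k)$. The ${\bf A\!}^1$-localizing case is word-for-word the same, with $U^{{\bf A\!}^1}_\add$, $\Mot^{{\bf A\!}^1}_\add$ and $KV$ replaced by $U^{{\bf A\!}^1}_\loc$, $\Mot^{{\bf A\!}^1}_\loc$ and $KH$, using the weak equivalence \eqref{eq:star-3} specialized to $\cA = k$ instead of the first weak equivalence of Theorem~\ref{thm:main2}. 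I expect the main obstacle to be precisely this last step: making the enriched co-Yoneda argument rigorous inside the derivator formalism --- in particular verifying that the spectral enrichment of $\HomC(\Mot^{{\bf A\!}^1}_\add, \HO(\Sp))$ is computed via restriction to compact objects --- together with the bookkeeping that $\mathbf{1}$ is compact and that the universal equivalences of Theorem~\ref{thm:main} lift to equivalences of spectrally enriched categories.
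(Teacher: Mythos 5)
Your proposal is correct and follows essentially the same route as the paper's proof: use the universal property of Theorem~\ref{thm:main} to replace $KV,E$ by $\overline{KV},\overline{E}$ on $\Mot^{{\bf A\!}^1}_\add$, identify $\overline{KV}$ with $\Hom_\Sp(U^{{\bf A\!}^1}_\add(k),-)$ via Theorem~\ref{thm:main2} with $\cA=k$, apply the $\Sp$-enriched Yoneda lemma to obtain $\overline{E}(U^{{\bf A\!}^1}_\add(k))\simeq E(k)$, and pass to $\pi_0$, with the localizing case handled identically via \eqref{eq:star-3}. The extra bookkeeping you supply (compactness of the $\otimes$-unit and restriction to compact generators to justify the enriched Yoneda step) is a more detailed justification of what the paper simply cites as the $\Sp$-enriched Yoneda lemma, not a different argument.
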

Note that Theorem~\ref{thm:main22} provides a streamlined construction of natural transformations: given your favorite ${\bf A\!}^1$-additive invariant $E$, the choice of an element of $E_0(k)$ gives automatically rise to a well-defined natural transformation $KV \Rightarrow E$ ! In the particular case of periodic cyclic homology ($E=HP^{\mathrm{flt}}$) we have  
$$ \mathrm{Nat}(KV,HP^{\mathrm{flt}}) \simeq HP_0^{\mathrm{flt}}(k) \simeq HP_0(k) \simeq k\,.$$
Let us denote by $KV \Rightarrow HP^{\mathrm{flt}}$ the natural transformation corresponding to $1 \in k$ and by $ch^{{\bf A\!}^1}$ the composition $KV\Rightarrow HP^{\mathrm{flt}} \stackrel{\epsilon}{\Rightarrow} HP$. Given a dg category $\cA$, we hence obtain induced homomorphisms
\begin{eqnarray}\label{eq:Chern-A1}
ch^{{\bf A\!}^1}_n(\cA): KV_n(\cA) \too HP_n(\cA) && n \geq 0\,.
\end{eqnarray}
\begin{theorem}\label{thm:main3}
When $\cA=A$, with $A$ a $k$-algebra, the above homomorphisms \eqref{eq:Chern-A1} (with $n \geq 1$) agree with Weibel's homotopy Chern characters \cite[\S5]{Weibel-Nil}.
\end{theorem}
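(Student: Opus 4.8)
The plan is to recognise Weibel's homotopy Chern character as a special case of the construction underlying Theorem~\ref{thm:main22}, and then to conclude by the uniqueness encoded in that classification. Recall first the construction of \cite[\S5]{Weibel-Nil}: since periodic cyclic homology is $\bfA^1$-homotopy invariant in characteristic zero (Proposition~\ref{prop:HP-invariant}), the classical $HP$-valued Chern character $ch\colon K\Rightarrow HP$ on $k$-algebras, evaluated on the simplicial $k$-algebra $A\mapsto A[\Delta^\bullet]$, descends along the canonical map $K\Rightarrow KV$ to a natural transformation of spectrum-valued functors $ch^{\mathrm{W}}\colon KV\Rightarrow HP$ which restricts to $ch$ along $K\Rightarrow KV$; Weibel's homotopy Chern characters $KV_n(A)\to HP_n(A)$ are the homomorphisms it induces on homotopy groups. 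It therefore suffices to prove $ch^{\mathrm{W}}\simeq ch^{\bfA^1}$ as natural transformations $KV\Rightarrow HP$ on $k$-algebras.

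First I would factor both transformations through $HP^{\mathrm{flt}}$. By construction $ch^{\bfA^1}=\epsilon\circ\widetilde{ch}$, where $\widetilde{ch}\colon KV\Rightarrow HP^{\mathrm{flt}}$ is the natural transformation corresponding, under Theorem~\ref{thm:main22}, to $1\in k\simeq HP_0^{\mathrm{flt}}(k)$. Since $KV$ and $K$ preserve filtered homotopy colimits, the universal property of $HP^{\mathrm{flt}}$ (Proposition~\ref{prop:general}) produces essentially unique factorizations $ch^{\mathrm{W}}=\epsilon\circ\widetilde{ch^{\mathrm{W}}}$ and $ch=\epsilon\circ\widehat{ch}$, with $\widetilde{ch^{\mathrm{W}}}\colon KV\Rightarrow HP^{\mathrm{flt}}$ and $\widehat{ch}\colon K\Rightarrow HP^{\mathrm{flt}}$; uniqueness forces $\widetilde{ch^{\mathrm{W}}}$ to restrict to $\widehat{ch}$ along $K\Rightarrow KV$. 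It now suffices to show $\widetilde{ch^{\mathrm{W}}}\simeq\widetilde{ch}$, for then $ch^{\mathrm{W}}=\epsilon\circ\widetilde{ch^{\mathrm{W}}}\simeq\epsilon\circ\widetilde{ch}=ch^{\bfA^1}$, and the induced homomorphisms agree in every degree, in particular for $n\geq1$. By Theorem~\ref{thm:main22} it is enough to check that $\widetilde{ch^{\mathrm{W}}}$ and $\widetilde{ch}$ have the same classifying element in $HP_0^{\mathrm{flt}}(k)$ --- equivalently, the same evaluation at the unit class $[k]\in K_0(k)$, which maps to the unit of $KV_0(k)$. For $\widetilde{ch}$ this element is $1$, by construction. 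For $\widetilde{ch^{\mathrm{W}}}$ it equals the evaluation of $\widehat{ch}$ at $[k]$; since $\epsilon$ is an isomorphism on the (homotopically finitely presented) dg category $k$ and $\epsilon\circ\widehat{ch}=ch$, this element is carried to $ch_0(k)([k])\in HP_0(k)\simeq k$, which is the rank $1$ because $ch$ is the classical Chern character. Hence both classifying elements equal $1$, so $\widetilde{ch^{\mathrm{W}}}\simeq\widetilde{ch}$, and the theorem follows.

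The hard part will be the first step: one must carefully match the explicit simplicial definition of \cite[\S5]{Weibel-Nil} with the description used above, \ie check that Weibel's homotopy Chern character really is the descent of the classical $HP$-valued Chern character along $K\Rightarrow KV$ forced by the $\bfA^1$-invariance of $HP$, and --- what comes down to the same normalization --- that it restricts to the usual Chern character on algebraic $K$-theory, so that its degree-zero component computes the rank. A secondary technical point is that $ch^{\mathrm{W}}$, \emph{a priori} only constructed levelwise or at the spectrum level, should be known to refine to a morphism of derivators (or at least to a natural transformation of filtered-homotopy-colimit-preserving functors) so that Proposition~\ref{prop:general} and Theorem~\ref{thm:main22} apply to it; alternatively, one can sidestep this by observing that, through the presentation of $KV(A)$ in terms of $K(A[\Delta^\bullet])$, every class in $KV_n(A)$ is detected on the algebraic $K$-theory of some $k$-algebra $A[\Delta^m]$, where both Chern characters reduce to the classical one.
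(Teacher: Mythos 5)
Your overall strategy---classify both transformations by their classifying element in $HP_0^{\mathrm{flt}}(k)\simeq k$ via Theorem~\ref{thm:main22} and check that both normalize to $1$---is genuinely different from the paper's, and the normalization computation itself (the degree-zero Chern character of the unit class is the rank $1$) is fine. But there is a real gap in the step you dismiss as ``secondary'': Theorem~\ref{thm:main22} and Proposition~\ref{prop:general} classify, respectively factor, $2$-morphisms between morphisms of derivators defined on all of $\HO(\dgcat)$, whereas Weibel's $ch^{\mathrm{W}}$ (and the classical $ch\colon K\Rightarrow HP$ from which it is built) is only given as a natural transformation of spectrum-valued functors on $k$-algebras. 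Before you can even form $\widetilde{ch^{\mathrm{W}}}$, you must exhibit a transformation at the level of morphisms of derivators $\HO(\dgcat)\to\HO(\Sp)$ whose restriction to $k$-algebras is the classical Chern character; this bridge between the explicit classical construction and the motivic framework is exactly what the paper imports from \cite[Theorem~2.8]{Products} and \cite[\S5]{Criterion} (the composition \eqref{eq:comp2}$\,\circ\,$\eqref{eq:comp1} agrees with $ch(\Delta_nA)$), and it cannot be recovered from the $\pi_0$-normalization at $k$ alone, since the classification theorem only sees transformations that already live in the motivic world.

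Your proposed sidestep does not repair this. First, $KV(A)$ is the realization of the simplicial spectrum $n\mapsto K(\Delta_nA)$, not a filtered homotopy colimit, so a class in $KV_n(A)$ of positive simplicial filtration need not come from $K_n(\Delta_mA)$ for any single $m$; no argument is offered for the claimed detection. Second, and more seriously, asserting that on such levels ``both Chern characters reduce to the classical one'' is, for $ch^{{\bf A\!}^1}$, precisely the statement to be proved: namely that the motivically defined transformation, evaluated levelwise on the algebras $\Delta_nA$, is the classical Chern character. That identification is the actual content of the paper's proof (the equality \eqref{eq:induced-map-1}$=$\eqref{eq:induced-map}, established via the commutative square \eqref{eq:square-key}, first for homotopically finitely presented $\cA$ and then for general $\cA$ by filtered homotopy colimits), resting on the cited results of \cite{Products,Criterion}. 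If you grant that input---a derivator-level $ch\colon K\Rightarrow HP$ restricting to the classical Chern character on algebras---then your classification-plus-normalization argument does close and is an attractive alternative; as written, however, the decisive comparison is assumed rather than proved.
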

Theorem~\ref{thm:main3} provides a simple conceptual characterization of Weibel's homotopy Chern characters. Intuitively speaking, these are the natural transformations corresponding to the unit $1$ of the base ring $k$.
\subsection*{Acknowledgments:} 
The author is very grateful to Joseph Ayoub, Alexander Beilinson, Christian Haesemeyer, Max Karoubi, Yuri Manin, Michel Van den Bergh, and Mariuz Wodzicki for useful discussions. He would like also to thank the MSRI, Berkeley, for its hospitality and excellent working conditions.
\subsection*{Notations}
Throughout the article we will work over a base commutative ring $k$. We will use freely the language of Quillen model categories; see \cite{Hirschhorn,Hovey,Quillen}. Given a Quillen model category $\cM$, we will write $\Ho(\cM)$ for its homotopy category. The category of simplicial sets (endowed with the classical Quillen model structure \cite{Goerss}) will be denoted by $\sSet$, the category of spectra (endowed with Bousfield-Friedlander's Quillen model structure \cite{BF}) will be denoted by $\Sp$, and the category of symmetric spectra (endowed with Hovery-Shipley-Smith's stable Quillen model structure \cite{Shipley}) will be denoted by $\Spt$. Finally, adjunctions will be displayed vertically with the left (resp. right) adjoint on the left (resp. right) hand-side.
\section{Differential graded categories}\label{sec:dg}
Let $\cC(k)$ be the category of complexes of $k$-modules. A {\em differential graded (=dg) category} $\cA$ is a category enriched over $\cC(k)$. A {\em dg functor} $F:\cA\to \cB$ is  a functor enriched over $\cC(k)$; consult Keller's ICM survey \cite{ICM-Keller} for details. In what follows, we will write $\dgcat$ for the category of (small) dg categories and dg functors.

Let $\cA$ be a dg category. The category $\dgHo(\cA)$ has the same objects as $\cA$ and $\dgHo(\cA)(x,y):=H^0\cA(x,y)$. The {\em opposite} dg category $\cA^\op$ has the same objects as $\cA$ and $\cA^\op(x,y):=\cA(y,x)$. A {\em right $\cA$-module} is a dg functor $\cA^\op \to \cC_\dg(k)$ with values in the dg category $\cC_\dg(k)$ of complexes of $k$-modules. Let us write $\cC(\cA)$ for the category of right $\cA$-modules. As explained in \cite[\S3.1]{ICM-Keller}, the dg structure of $\cC_\dg(k)$ makes $\cC(\cA)$ into a dg category $\cC_\dg(\cA)$. The {\em derived category} $\cD(\cA)$ of $\cA$ is the localization of $\cC(\cA)$ with respect to quasi-isomorphisms. Its subcategory of compact objects will be denoted by $\cD_c(\cA)$.

A dg functor $F:\cA\to \cB$ is called a {\em Morita equivalence} if the restriction of scalars $\cD(\cB) \stackrel{\sim}{\to} \cD(\cA)$ is an equivalence. As proved in \cite[Theorem~5.3]{IMRN}, $\dgcat$ admits a Quillen model structure whose weak equivalences are the Morita equivalences.

The {\em tensor product $\cA\otimes\cB$} of dg categories is defined as follows: the set of objects is the cartesian product of the sets of objects of $\cA$ and $\cB$ and $(\cA\otimes\cB)((x,w),(y,z)):= \cA(x,y) \otimes \cB(w,z)$. As explained in \cite[\S2.3]{ICM-Keller}, this construction gives rise to symmetric monoidal categories $(\dgcat, -\otimes-, k)$ and $(\Ho(\dgcat), -\otimes^\bfL-, k)$.

Given dg categories $\cA$ and $\cB$, an {\em $\cA\text{-}\cB$-bimodule $\mathsf{B}$} is a dg functor $\mathsf{B}:\cA \otimes^\bfL \cB^\op\to \cC_\dg(k)$, \ie a right $(\cA^\op \otimes^\bfL \cB)$-module. A standard example is the $\cA\text{-}\cA$-bimodule
\begin{eqnarray}\label{eq:bimodule-Id}
\cA \otimes^\bfL \cA^\op \too \cC_\dg(k) && (x,y) \mapsto \cA(y,x)\,.
\end{eqnarray}
\begin{notation}\label{not:rep}
Given dg categories $\cA$ and $\cB$, let $\rep(\cA,\cB)$ be the full triangulated subcategory of $\cD(\cA^\op \otimes^\bfL \cB)$ consisting of those $\cA\text{-}\cB$-bimodules $\mathsf{B}$ such that $\mathsf{B}(x,-) \in \cD_c(\cB)$ for every object $x \in \cA$. In the same vein, let $\rep_\dg(\cA,\cB)$ be the full dg subcategory of $\cC_\dg(\cA^\op \otimes^\bfL \cB)$ consisting of those $\cA\text{-}\cB$-bimodules $\mathsf{B}$ which belong to $\rep(\cA,\cB)$. By construction, we have $\dgHo(\rep_\dg(\cA,\cB))\simeq \rep(\cA,\cB)$.
\end{notation}
\subsection{Finite dg cells}\label{sub:cells}
For $n \in \mathbb{Z}$, let $S^{n}$ be the complex $k[n]$
(with $k$ concentrated in degree $n$) and $D^n$ the mapping
cone of the identity on $S^{n-1}$. Let $\dgS(n)$ be the dg
category with two objects $1$ and $2$ such that $ \dgS(n)(1,1)=k \ko
\dgS(n)(2,2)=k \ko \dgS(n)(2,1)=0  \ko \dgS(n)(1,2)=S^{n} $ and with composition given by multiplication. Similarly, let $\dgD(n)$ be the dg
category with two objects $3$ and $4$ such that $ \dgD(n)(3,3)=k \ko
\dgD(n)(4,4)=k \ko \dgD(n)(4,3)=0 \ko \dgD(n)(3,4)=D^n $. For $n \in \bbZ$, let $\iota(n):\dgS(n-1)\to \dgD(n)$ be the dg functor that sends $1$ to
$3$, $2$ to $4$ and $S^{n-1}$ to $D^n$ by the identity on $k$ in
degree~$n-1$\,:
$$
\vcenter{
\xymatrix@C=2em@R=1em{
\dgS(n-1) \ar@{=}[d] \ar[rrr]^{\displaystyle \iota(n)}
&&& \dgD(n) \ar@{=}[d]
\\
&&&\\
\\
1 \ar@(ul,ur)[]^{k} \ar[dd]_-{S^{n-1}}
& \ar@{|->}[r] &
& 3\ar@(ul,ur)[]^{k}  \ar[dd]^-{D^n}
\\
& \ar[r]^-{\incl} &
\\
2 \ar@(dr,dl)[]^{k}
& \ar@{|->}[r] &
& 4\ar@(dr,dl)[]^{k}
}}
\qquad\text{where}\qquad
\vcenter{\xymatrix@R=1em@C=.8em{ S^{n-1} \ar[rr]^-{\incl} \ar@{=}[d]
&& D^n \ar@{=}[d]
\\
\ar@{.}[d]
&& \ar@{.}[d]
\\
0 \ar[rr] \ar[d]
&& 0 \ar[d]
\\
0 \ar[rr] \ar[d]
&& k \ar[d]^{\id}
\\
k \ar[rr]^{\id} \ar[d]
&& k \ar[d]
&{\scriptstyle(\textrm{degree }n-1)}
\\
0 \ar[rr] \ar@{.}[d]
&& 0 \ar@{.}[d]
\\
&&}}
$$
A dg category $\cA$ is called a \emph{finite dg cell} if the unique dg functor $\emptyset\to\cA$ (where the empty dg category $\emptyset$ is the initial object in $\dgcat$) can be expressed as a finite composition of pushouts along the dg functors $\iota(n), n \in \bbZ$, and $\emptyset \to k$.

\subsection{Smooth, proper, and homotopically finitely presented dg categories}\label{sub:smooth}
Recall from \cite[Definition~17.4.1]{Hirschhorn} that every Quillen model category comes equipped with a mapping space $\Map(-,-)$. A dg category $\cA$ is called {\em homotopically finitely presented} if for each filtered direct system $\{\cB_j\}_{j \in J}$ the induced map
$$ \hocolim_j \Map(\cA,\cB_j) \too \Map(\cA,\hocolim_j \cB_j)$$
is a weak equivalence of simplicial sets. As proved in \cite[Proposition~5.2]{Additive}, the homotopically finitely presented dg categories are precisely the retracts in the homotopy category $\Ho(\dgcat)$ of the finite dg cells. Recall from Kontsevich \cite{finMot,IAS,Miami} that a dg category $\cA$ is called {\em smooth} if the $\cA\text{-}\cA$-bimodule \eqref{eq:bimodule-Id} belongs to $\cD_c(\cA^\op \otimes^\bfL \cA)$ and {\em proper} if for each pair of objects $(x,y)$ we have $\sum_i \mathrm{rank}\, H^i \cA(x,y) < \infty$. The standard examples are the finite dimensional $k$-algebras of finite global dimension (when $k$ is a perfect field) and the dg categories $\perf_\dg(X)$ associated to smooth and proper $k$-schemes $X$. As proved in \cite[Proposition~5.10]{CT1}, every smooth and proper dg category is homotopically finitely presented. 
\section{Algebraic $K$-theories}\label{sec:K-theories}
Let $k[t]$ be the $k$-algebra of polynomials and 
\begin{eqnarray}\label{eq:homot1}
\iota: k \hookrightarrow k[t] && ev_0, ev_1: k[t] \to k
\end{eqnarray}
the inclusion and evaluation maps. Given a dg category $\cA$, let $\iota: \cA \to \cA[t]$ and $ev_0, ev_1: \cA[t] \to \cA$ be the dg functors obtained by tensoring $\cA$ with \eqref{eq:homot1}. 
\subsection{$\bfA^{\!1}$-homotopization}
Let $\cM$ be a model category, $E:\dgcat \to \cM$ a functor sending Morita equivalences to weak equivalences, $E:\HO(\dgcat) \to \HO(\cM)$ the associated morphism of derivators, and $
\Delta_n:= k[t_0, \ldots, t_n]/ (\sum_{i=0}^n t_i -1), n \geq 0$, the simplicial $k$-algebra with faces and degenerancies given by the formulas
\begin{eqnarray*}
d_r(t_i) := \left\{ \begin{array}{lcr}
t_i & \text{if} & i <r \\
0 & \text{if} & i =r \\
t_{i-1} & \text{if} & i > r \\
\end{array} \right.
&
&
s_r(t_i) := \left\{ \begin{array}{lcr}
t_i & \text{if} & i <r \\
t_i + t_{i+1} & \text{if} & i =r \\
t_{i+1} & \text{if} & i > r \\
\end{array} \right.\,.
\end{eqnarray*}
Out of this data, one constructs the {\em ${\bf A\!}^1$-homotopization} of $E$:
\begin{eqnarray*}
E^h:\HO(\dgcat) \too \HO(\cM) & & \cA \mapsto \mathrm{hocolim}_n\, E(\cA \otimes \Delta_n)\,.
\end{eqnarray*}
Note that $E^h$ comes equipped with a $2$-morphism $\eta: E \Rightarrow E^h$.

\begin{proposition}\label{prop:A1-homot}
\begin{itemize}
\item[(i)] The morphism $E^h$ is $\bfA^{\!1}$-homotopy invariant.
\item[(ii)] When $E$ is $\bfA^{\!1}$-homotopy invariant, $\eta: E \Rightarrow E^h$ is a $2$-isomorphism.
\item[(iii)] When $E$ is additive/localizing, $E^h$ is also additive/localizing.
\item[(iv)] When $\cM$ carries an homotopy colimit preserving symmetric monoidal structure and $E$ is symmetric monoidal, $E^h$ is also symmetric monoidal.
\end{itemize}
\end{proposition}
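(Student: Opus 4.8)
\emph{Proof proposal.} All four statements reduce to standard manipulations with the simplicial object $[n]\mapsto E(\cA\otimes\Delta_n)$ and its homotopy colimit, using the formalism already in place. For (i), I would first note that $ev_0,ev_1:\cA[t]\to\cA$ are both retractions of $\iota$, so it suffices to show that $\bfA^{\!1}$-homotopic dg functors become equal after $E^h$: if $f_0,f_1:\cA\to\cB$ are related by a dg functor $H:\cA\to\cB[s]$ with $ev^s_0\circ H=f_0$ and $ev^s_1\circ H=f_1$, then $E^h(f_0)=E^h(f_1)$. Granting this and applying it to $f_0=\iota\circ ev_0$, $f_1=\id_{\cA[t]}$ and the homotopy $t\mapsto ts$ yields $\iota_\ast\circ ev_{0,\ast}=\id$, which together with $ev_{0,\ast}\circ\iota_\ast=\id$ makes $\iota$ invertible on $E^h$. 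To prove the homotopy lemma I would use the classical prism decomposition of $\Delta_n\times\Delta_1$ into $n+1$ non-degenerate $(n+1)$-simplices; algebraically this produces maps $\Delta_{n+1}\to\Delta_n\otimes\Delta_1$ which, combined with $H$, assemble $\{E(\cA\otimes\Delta_n\otimes k[s])\}_n$ into a simplicial homotopy between the simplicial maps induced by $f_0$ and $f_1$, and a simplicial homotopy induces homotopic maps after $\hocolim_{\Delta^{\op}}$ (compare the analogous computations in \cite{Additive}).

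For (ii), observe that $\Delta_n=k[t_0,\dots,t_n]/(\sum_i t_i-1)$ is, as a $k$-algebra, a polynomial ring $k[s_1,\dots,s_n]$, so the canonical dg functor $\cA\to\cA\otimes\Delta_n$ (the tensor of $\id_\cA$ with the unit $k\to\Delta_n$) factors as a finite composite $\cA\to\cA[s_1]\to\cA[s_1,s_2]\to\cdots$ of arrows of the form $\cC\to\cC[t]$; when $E$ is $\bfA^{\!1}$-homotopy invariant each of these is inverted, so $E(\cA)\to E(\cA\otimes\Delta_n)$ is an equivalence for every $n$. These equivalences are precisely the components of the canonical map from the constant simplicial object at $E(\cA)$ to $E(\cA\otimes\Delta_\bullet)$ (induced by the map $\mathrm{const}_k\to\Delta_\bullet$ of simplicial $k$-algebras), so that map is a levelwise weak equivalence, and $\hocolim_{\Delta^{\op}}$ sends a constant simplicial object to its value (the nerve of $\Delta$ being contractible); hence $\eta_\cA$ is an equivalence. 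For (iii), the point is that $\hocolim_{\Delta^{\op}}$ commutes with everything in sight: on $\HO(\dgcat)$ the morphism $-\otimes\Delta_n$ preserves homotopy colimits (it has an internal-hom right adjoint, as $\Ho(\dgcat)$ is closed symmetric monoidal), $E$ preserves filtered homotopy colimits by hypothesis, and homotopy colimits commute, giving $E^h(\hocolim_j\cB_j)\simeq\hocolim_j E^h(\cB_j)$. For the exactness clause I would use that $-\otimes\Delta_n$ carries a split short exact sequence (resp. a short exact sequence) of dg categories to one of the same type — in the localizing case this invokes $k$-flatness of $\Delta_n$ and the stability of short exact sequences under $-\otimes^{\bfL}-$ from \cite{Additive} — so $E$ produces, levelwise in $n$, a direct-sum decomposition (resp. a distinguished triangle), which $\hocolim_{\Delta^{\op}}$ preserves, being additive and exact.

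For (iv) I would invoke Eilenberg--Zilber. Since $\otimes$ on $\cM$ preserves homotopy colimits in each variable and $E$ is symmetric monoidal, $E^h(\cA)\otimes E^h(\cB)\simeq\hocolim_{(m,n)\in\Delta^{\op}\times\Delta^{\op}}E(\cA\otimes\cB\otimes\Delta_m\otimes\Delta_n)$, and restriction along the homotopy final diagonal $\Delta^{\op}\to\Delta^{\op}\times\Delta^{\op}$ collapses this to $\hocolim_p E(\cA\otimes\cB\otimes\Delta_p\otimes\Delta_p)$. It then remains to identify the latter with $E^h(\cA\otimes\cB)$: unwinding, $\hocolim_p E(\cD\otimes\Delta_p\otimes\Delta_p)\simeq\hocolim_{p,q}E(\cD\otimes\Delta_p\otimes\Delta_q)\simeq(E^h)^h(\cD)$, which by parts (i)--(ii) applied to the already $\bfA^{\!1}$-invariant morphism $E^h$ is canonically $E^h(\cD)$; the comparison is induced by the multiplication $\mu:\Delta_\bullet\otimes\Delta_\bullet\to\Delta_\bullet$, $a\otimes b\mapsto ab$, of simplicial $k$-algebras. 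One finally checks that the resulting lax structure maps are associative, symmetric, unital, and compatible with $\eta$.

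I expect parts (i)--(iii) to be essentially bookkeeping once the earlier machinery is granted. The main obstacle is (iv): organizing the Eilenberg--Zilber reduction of the bi-simplicial homotopy colimit to a single $\Delta^{\op}$-indexed one and, above all, verifying that the structure maps so produced genuinely assemble into a symmetric monoidal morphism of derivators compatible with $\eta$, rather than merely yielding a non-coherent equivalence $E^h(\cA)\otimes E^h(\cB)\simeq E^h(\cA\otimes\cB)$.
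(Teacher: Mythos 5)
Your proposal is correct and follows essentially the same route as the paper: the prism-decomposition homotopy you describe in (i) is exactly the paper's explicit simplicial homotopy $h_j$ (sending $t\mapsto t(t_{j+1}+\cdots+t_{n+1})$), items (ii)--(iii) rest on the same identification $\Delta_n\simeq k[t_1,\dots,t_n]$ and flatness of the $\Delta_n$, and (iv) is the paper's chain of isomorphisms using hocolim-preservation of $\otimes$ on $\cM$, monoidality of $E$, and cofinality of the diagonal $\Delta\to\Delta\times\Delta$. Your additional care at the end of (iv) --- identifying $\hocolim_p E(\cD\otimes\Delta_p\otimes\Delta_p)$ with $E^h(\cD)$ via the multiplication of the simplicial algebra and the equivalence $(E^h)^h\simeq E^h$ from (i)--(ii) --- merely makes explicit a point the paper subsumes in its appeal to diagonal cofinality, so it is a refinement rather than a different approach.
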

\begin{proof}
One one hand we have $ev_0 \circ \iota =\id$. On the other hand, the simplicial map
$(k[t] \stackrel{ev_0}{\to} k \stackrel{\iota}{\to} k[t]) \otimes \Delta_n, n \geq 0$, is homotopic to $\id$ via the simplicial homotopy 
\begin{equation}\label{eq:simplicial-homotopy}
\{h_j: k[t] \otimes \Delta_n \too k[t] \otimes \Delta_{n+1} \}_{0 \leq j \leq n}
\end{equation}
that sends $t \mapsto t (t_{j+1} + \cdots + t_{n+1})$ and $t_i \mapsto s_j(t_i)$.  
By first tensoring $\cA$ with \eqref{eq:simplicial-homotopy} and then by applying the functors $E:\dgcat \to \cM$ and $\mathrm{hocolim}_n:\HO(\cM)(\Delta) \to \Ho(\cM)$ (where $\Delta$ is the category of finite ordinal numbers with order-preserving maps between them), we conclude that $E^h(\iota \circ ev_0)=\id$. This implies that the map
$$ E^h(\cA) := \mathrm{hocolim}_n E(\cA\otimes \Delta_n) \too \mathrm{hocolim}_n E(\cA \otimes k[t] \otimes \Delta_n)=:E^h(\cA[t])$$
is an isomorphism and so item (i) is proved. Item (ii) follows from the fact that all the maps of the simplicial object $n \mapsto E(\cA \otimes \Delta_n)$ are isomorphisms whenever $E$ is ${\bf A\!}^1$-homotopy invariant. In what concerns item (iii), note first that $\Delta_0\simeq k$ and $\Delta_n \simeq k[t_0, \cdots, t_{n-1}]$ for $n >0$. This implies that the $k$-algebras $\Delta_n, n \geq 0$, are flat. As a consequence, we obtain well-defined morphisms of derivators
\begin{eqnarray}\label{eq:Delta}
- \otimes \Delta_n : \HO(\dgcat) \too \HO(\dgcat) && n \geq 0\,.
\end{eqnarray}
Thanks to Drinfeld \cite[Proposition~1.6.3]{Drinfeld}, these morphisms preserve (split) short exact sequences of dg categories. Moreover, since the symmetric monoidal structure on $\HO(\dgcat)$ is homotopy colimit preserving (see \cite[Proposition~3.3]{CT1}), the morphisms \eqref{eq:Delta} preserve also filtered homotopy colimits. These facts imply item (iii). Finally, item (iv) follows from the following sequence of isomorphisms
\begin{eqnarray}
E^h(\cA) \otimes E^h(\cB) & := & \mathrm{hocolim}_n E(\cA \otimes \Delta_n) \otimes \mathrm{hocolim}_{n'} E(\cB \otimes \Delta_{n'}) \nonumber \\ 
& \simeq & \mathrm{hocolim}_{n,n'} (E(\cA \otimes \Delta_n) \otimes E(\cB \otimes \Delta_n))\label{eq:star-1} \\
& \simeq & \mathrm{hocolim}_{n,n'} E((\cA\otimes^\bfL \cB) \otimes (\Delta_n \otimes \Delta_{n'})) \label{eq:star-2} \\
& \simeq & \mathrm{hocolim}_n E(\cA\otimes^\bfL \cB \otimes \Delta_n) =: E^h(\cA \otimes^\bfL \cB)\,. \label{eq:star-33}
\end{eqnarray}
Some explanations are in order: \eqref{eq:star-1} follows from the assumption that the symmetric monoidal structure on $\cM$ is homotopy colomit preserving; \eqref{eq:star-2} follows from the fact that $E$ is symmetric monoidal; and \eqref{eq:star-33} follows from the cofinality of the diagonal map $\Delta \to \Delta \times \Delta$.
\end{proof}
\begin{remark}\label{rk:spectral}
When $\cM$ is the Quillen model category of spectra $\Sp$, one has a standard convergent right half-plane spectral sequence $E^1_{pq}=N^p \pi_q E(\cA) \Rightarrow \pi_{p+q} E^h(\cA)$, where $N^\ast \pi_q E(\cA)$ is the Moore complex of the simplical group $n \mapsto \pi_qE(\cA\otimes \Delta_n)$.
\end{remark}
\subsection{Karoubi-Villamayor's $K$-theory}\label{sub:KV}
Recall from \cite[Example~15.6]{Additive} the construction of connective algebraic $K$-theory $K:\HO(\dgcat) \to \HO(\Sp)$. This additive invariant is induced from a functor $\dgcat \to \Sp$ (sending Morita equivalences to weak equivalences) and so thanks to Proposition~\ref{prop:A1-homot} it gives rise to a well-defined ${\bf A\!}^1$-additive invariant
\begin{eqnarray*}
KV:= K^h:\HO(\dgcat) \too \HO(\Sp) && \cA \mapsto \mathrm{hocolim}_n K(\cA\otimes \Delta_n)\,.
\end{eqnarray*}
Remark~\ref{rk:spectral} furnishes a convergent spectral sequence $E^1_{p,q}=N^p K_q(\cA) \Rightarrow KV_{p+q} (\cA)$.
\begin{proposition}[Agreement]\label{prop:agreement}
When $\cA=A$, with $A$ a $k$-algebra, the groups $KV_n(\cA), n \geq 1$, agree with the Karoubi-Villamayor's $K$-theory groups of $A$.
\end{proposition}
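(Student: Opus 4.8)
The plan is to reduce the statement to the classical definition of Karoubi--Villamayor $K$-theory by unwinding both constructions and comparing the relevant simplicial objects. Recall that for a $k$-algebra $A$ the classical groups $KV_n(A)$, $n \geq 1$, are defined as the homotopy groups of a simplicial group built from the polynomial extensions $A[\Delta^\bullet] = A \otimes \Delta_\bullet$, or equivalently (by the Karoubi--Villamayor theorem) as $\pi_{n-1}$ of the loop space of the simplicial set $n \mapsto BGL(A \otimes \Delta_n)^+$ (suitably interpreted). Since the $\Delta_n$ in our \S\ref{sec:K-theories} are literally the standard cosimplicial $k$-algebra $k[t_0,\ldots,t_n]/(\sum t_i - 1)$, the simplicial spectrum $n \mapsto K(A \otimes \Delta_n)$ appearing in the definition of $KV:=K^h$ has, in positive degrees, homotopy groups that recover the plus-construction description of the classical theory.

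First I would recall the precise agreement property of connective $K$-theory $K:\HO(\dgcat)\to\HO(\Sp)$ from \cite[Example~15.6]{Additive}, namely that $K_n(A)$ agrees with Quillen's algebraic $K$-theory $K_n(A)$ for $n \geq 0$ and with $\pi_n$ of the connective $K$-theory spectrum; in particular $K_{\geq 1}(A) = \pi_{\geq 1} \Omega^\infty (\mathbb{Z}\times BGL(A)^+)$. Then I would invoke the spectral sequence $E^1_{p,q} = N^p K_q(\cA) \Rightarrow KV_{p+q}(\cA)$ from Remark~\ref{rk:spectral}, together with the parallel classical fact: the Karoubi--Villamayor groups sit at the edge of exactly the analogous spectral sequence coming from the simplicial group $GL(A\otimes\Delta_\bullet)$ (this is essentially the Anderson/Gersten description, or Weibel's treatment in \cite{Weibel-KH}). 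The cleanest route, though, is probably to bypass the spectral sequence comparison and instead match the simplicial objects directly: the simplicial spectrum $n \mapsto K(A\otimes\Delta_n)$, after taking $\pi_q$ for $q \geq 1$, yields the simplicial abelian groups $n \mapsto K_q(A\otimes\Delta_n)$ whose homotopy colimit (= the realization) computes $KV_{p+q}(A)$. By Karoubi--Villamayor's own definition $KV_n(A)$ for $n \geq 1$ is precisely $\pi_{n-1}$ of $|GL(A\otimes\Delta_\bullet)|$, and this matches the contribution of the row $q=1$ (i.e.\ $K_1 = GL/E$-type data) together with the stabilization provided by higher $q$; I would cite the comparison theorem (see e.g.\ \cite{Weibel-KH} or Weibel's $K$-book) identifying $\pi_n$ of the realization of $n\mapsto K(A\otimes\Delta_n)$ with $KV_n(A)$ in degrees $\geq 1$.

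The key steps, in order, are: (1) identify $KV(\cA) = K^h(\cA)$ evaluated at $\cA = A$ with the realization $|n \mapsto K(A\otimes\Delta_n)|$, using that $\Delta_n$ are the standard polynomial simplicial $k$-algebra and that $\Delta_0 \simeq k$, $\Delta_n \simeq k[t_0,\ldots,t_{n-1}]$ are flat (so the derived and underived tensor agree and the functor $-\otimes\Delta_n$ is well-defined, as in Proposition~\ref{prop:A1-homot}(iii)); (2) recall the agreement of connective $K$-theory $K$ on $k$-algebras with Quillen $K$-theory; (3) invoke the classical theorem of Karoubi--Villamayor (respectively its reformulation in terms of the $+$-construction applied levelwise to $n \mapsto BGL(A\otimes\Delta_n)$) that its homotopy groups in degrees $\geq 1$ compute the groups $KV_n(A)$; (4) conclude by comparing $\pi_n |n\mapsto K(A\otimes\Delta_n)|$ with $\pi_n$ of the realization of the $+$-construction tower, the difference between connective $K$-theory and $\Omega^\infty$ of it being invisible in degrees $\geq 1$.

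I expect the main obstacle to be step (3)--(4): matching the homotopy-colimit-of-spectra formulation here with the classical simplicial-group (or $h$-functor on $GL$) definition of $KV_n(A)$, and in particular checking that no discrepancy arises in low degrees (the notorious off-by-one in $KV$, where $KV_1(A) = GL(A)/E(A \otimes k[t], (t))$ sits in simplicial degree $0$). One has to be careful that the realization $|n\mapsto K(A\otimes\Delta_n)|$ in the stable setting matches $\pi_{n}$ of Karoubi--Villamayor's space $|GL(A\otimes\Delta_\bullet)|^+$ rather than being shifted. I would handle this by citing the now-standard identification (Weibel, Haesemeyer--Weibel, or the original papers \cite{KV,KV1,Weibel-KH}) that $KV$-theory is the $h$-ification of $K$-theory and that on $k$-algebras its homotopy groups in positive degrees coincide with the Karoubi--Villamayor groups, so that the content of the proposition is really a bookkeeping verification that the construction $K^h$ of \S\ref{sub:KV} is literally that $h$-ification. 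The flatness remarks and Drinfeld's lemma (already used in Proposition~\ref{prop:A1-homot}) ensure the tensor products behave correctly, so the only genuine work is pinning down the indexing convention.
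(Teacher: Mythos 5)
Your proposal is correct and follows essentially the same route as the paper: identify $KV(A)=K^h(A)$ with the realization of the simplicial spectrum $n\mapsto K(A\otimes\Delta_n)$ and then cite the classical comparison (Weibel) identifying its positive-degree homotopy with the Karoubi--Villamayor groups. The low-degree/indexing issue you flag is handled in the paper in exactly the way you suggest, by quoting \cite[\S IV page~83]{Weibel-book}: the groups $KV_n(A)$, $n\geq 1$, are the homotopy groups of the $0$-connected cover $KV(A)\langle 0\rangle$.
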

\begin{proof}
Let $KV(A)\langle 0\rangle$ be the $0$-connected cover of $KV(A)$. As explained in \cite[\S IV page 83]{Weibel-book}, the Karoubi-Villamayor's algebraic $K$-theory groups $KV_n(A), n \geq 1$, agree with the homotopy groups of $KV(A)\langle 0\rangle$. This implies our claim.
\end{proof}
\begin{notation}
Let $\cO$ be an object in a triangulated category $\cT$ and $l \geq 2$ an integer. We define the {\em mod-$l$ Moore object} $\cO/l$ of $\cO$ as the cofiber of $\cdot l:\cO \to \cO$.
\end{notation}
Given $l \geq 2$, consider the {\em mod-$l$ Karoubi-Villamayor's algebraic $K$-theory}
\begin{eqnarray*}
KV(-;\bbZ/l):\HO(\dgcat) \to \HO(\Sp) && \cA \mapsto KV(\cA) \wedge^\bfL \bbS/l\,,
\end{eqnarray*}
where $\bbS/l$ is the mod-$l$ Moore spectrum of $\bbS$. Since $-\wedge^\bfL \bbS/l$ preserves direct sums, $KV(-;\bbZ/l)$ is also an ${\bf A\!}^1$-additive invariant. Moreover, thanks to the universal coefficients theorem (see \cite[\S IV page~19]{Weibel-book}), we have the short exact sequence
$$ 0 \to KV_n(\cA)\otimes_\bbZ \bbZ/l \to KV_n(\cA;\bbZ/l) \to \{l\text{-}\mathrm{torsion}\,\,\mathrm{in}\,\,KV_{n-1}(\cA)\} \to 0\,.$$

\subsection{Weibel's homotopy $K$-theory}\label{sub:KH}
Recall from \cite[Theorem~10.3]{Additive} the construction of nonconnective algebraic $K$-theory 
$\bbK: \HO(\dgcat) \to \HO(\Sp)$. This localizing invariant is induced from a functor $\dgcat \to \Sp$ (sending Morita equivalences to weak equivalences) and so thanks to Proposition~\ref{prop:A1-homot} it gives rise to a well-defined $\bfA^{\!1}$-localizing invariant
\begin{eqnarray*}
KH:=\bbK^h: \HO(\dgcat) \too \HO(\Sp) && \cA \mapsto \mathrm{hocolim}_n\, \bbK(\cA \otimes \Delta_n)\,.
\end{eqnarray*}
Remark \ref{rk:spectral} furnishes a convergent spectral sequence $E^1_{p,q}=N^p \bbK_q(\cA) \Rightarrow KH_{p+q} (\cA)$. Given an integer $l \geq 2$, consider the {\em mod-$l$ Weibel's homotopy $K$-theory}
\begin{eqnarray*}
KH(-;\bbZ/l):\HO(\dgcat) \to \HO(\Sp) && \cA \mapsto KH(\cA) \wedge^\bfL \bbS/l\,.
\end{eqnarray*}
Since $-\wedge \bbS/l$ preserves distinguished triangles, $KH(-;\bbZ/l)$ is also an ${\bf A\!}^1$-localizing invariant. As above, we have the short exact sequence
$$ 0 \to KH_n(\cA)\otimes_\bbZ \bbZ/l \to KH_n(\cA;\bbZ/l) \to \{l\text{-}\mathrm{torsion}\,\,\mathrm{in}\,\,KH_{n-1}(\cA)\} \to 0\,.$$
\begin{proposition}[Agreement]\label{prop:agreementKH}
Let $\cA$ be a dg category.
\begin{itemize}
\item[(i)] When $\cA=A$, with $A$ a $k$-algebra, $KH(\cA)$ agrees with Weibel's homotopy algebraic $K$-theory of $A$. 
\item[(ii)]  When $\cA= \perf_\dg(X)$, with $X$ a quasi-compact quasi-separated $k$-scheme, $KH(\cA)$ agrees with Weibel's homotopy algebraic $K$-theory of $X$.
\end{itemize}
\end{proposition}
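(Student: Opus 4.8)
The plan is to reduce both items to two inputs: (a) the agreement of the nonconnective $K$-theory $\bbK$ of dg categories with classical Bass--Thomason--Trobaugh nonconnective $K$-theory — so that $\bbK(A)\simeq K^B(A)$ for every $k$-algebra $A$ and $\bbK(\perf_\dg(X))\simeq K^B(X)$ for every quasi-compact quasi-separated $k$-scheme $X$, functorially in $A$, resp. in $X$ — which is built into the construction of $\bbK$ in \cite[Theorem~10.3]{Additive} together with the comparison results of \cite{CT1} (see also \cite{Thomason}); and (b) a K\"unneth-type identification of the simplicial dg category $n\mapsto\cA\otimes\Delta_n$ with the classical simplicial object whose geometric realization defines Weibel's $KH$. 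Input (a) I would quote rather than reprove. Granting (a) and (b), the identifications $KH(\cA)=\hocolim_n\bbK(\cA\otimes\Delta_n)\simeq\hocolim_n K^B(-[\Delta^n])$ are termwise and compatible with the simplicial structure, and the right-hand side is by definition Weibel's homotopy $K$-theory \cite{Weibel-KH}; the resulting identification of homotopy groups and of the convergent spectral sequence of Remark~\ref{rk:spectral} is then automatic.

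For item (i), let $\cA=A$ with $A$ a $k$-algebra. Since $\Delta_n$ is $k$-flat (as recorded in the proof of Proposition~\ref{prop:A1-homot}), $\cA\otimes\Delta_n$ computes $\cA\otimes^\bfL\Delta_n$ and, as a $k$-algebra, equals $A[\Delta^n]:=A[t_0,\dots,t_n]/(\sum_i t_i-1)=A\otimes_\bbZ\bbZ[\Delta^n]$; in particular the $k$-relative construction used here matches Weibel's $\bbZ$-relative one, again by flatness of $\bbZ[\Delta^n]$ over $\bbZ$. Then (a) gives $\bbK(A\otimes\Delta_n)\simeq K^B(A[\Delta^n])$ naturally in $[n]$, and passing to $\hocolim_n$ (the realization of the simplicial spectrum) identifies $KH(A)$ with Weibel's $KH(A)$ of \cite{Weibel-KH}.

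For item (ii), let $\cA=\perf_\dg(X)$ with $X$ quasi-compact quasi-separated over $k$. Again $\Delta_n$ is $k$-flat, so $X_{\Delta_n}:=X\times_k\Spec\Delta_n\to X$ is flat and affine; consequently the pullback $p^\ast G$ of a compact generator $G$ of $\perf(X)$ is a compact generator of $\perf(X_{\Delta_n})$, and flat base change gives $\End(p^\ast G)\simeq\End(G)\otimes_k\Delta_n$. This yields a Morita equivalence $\perf_\dg(X)\otimes\Delta_n\simeq\perf_\dg(X_{\Delta_n})$ — the K\"unneth formula for perfect complexes; compare \cite{CT1}. Writing $X[\Delta^n]:=X_{\Delta_n}=X\times_\bbZ\Delta^n_\bbZ$, input (a) for schemes gives $\bbK(\perf_\dg(X)\otimes\Delta_n)\simeq K^B(X[\Delta^n])$ naturally in $[n]$, and $\hocolim_n$ identifies $KH(\perf_\dg(X))$ with $\hocolim_n K^B(X[\Delta^n])=KH(X)$ \cite{Weibel-KH}. (Since $A$ is Morita equivalent to $\perf_\dg(\Spec A)$, item (i) is the affine case of item (ii), but recording it directly is cleaner.)

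The main obstacle is the K\"unneth/Morita step in (ii): establishing $\perf_\dg(X)\otimes\Delta_n\simeq\perf_\dg(X_{\Delta_n})$ \emph{naturally in $[n]$}, so that it is compatible with the faces and degeneracies of $\Delta_\bullet$. For affine $X$ this is immediate; for general quasi-compact quasi-separated $X$ it rests on the existence of compact generators and on flat base change, as sketched above, and the naturality has to be checked on a fixed generator. Two secondary bookkeeping points remain: first, that the $\hocolim$ over $\Delta$ appearing in the definition of $E^h$ really is the geometric realization of the simplicial spectrum $n\mapsto\bbK(\cA\otimes\Delta_n)$, so that it coincides with the realization used by Weibel and Thomason--Trobaugh and produces the spectral sequence of Remark~\ref{rk:spectral}; and second, that the comparison in (a) is functorial in the ring, resp. the scheme, which is precisely what licenses its application to the entire simplicial object rather than degreewise only.
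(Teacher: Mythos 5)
Your overall strategy is the same as the paper's: identify the simplicial dg category $n\mapsto\cA\otimes\Delta_n$ termwise with the classical simplicial object, use agreement of the nonconnective $K$-theory $\bbK$ with Bass--Thomason--Trobaugh $K$-theory, and pass to the homotopy colimit. Item (i) is handled exactly as in the paper. For item (ii) there are two points of divergence. The first is harmless but worth noting: you prove a genuine Morita equivalence $\perf_\dg(X)\otimes\Delta_n\simeq\perf_\dg(X\times\Spec(\Delta_n))$ via a compact generator and flat base change (and you correctly flag naturality in $[n]$ as the delicate point); the paper sidesteps this by invoking \cite[Proposition~8.2]{Regularity}, which asserts the weaker statement that the two dg categories become equivalent \emph{after applying any localizing invariant} (here $\bbK$), using only that $\Spec(\Delta_n)$ is flat. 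Your stronger statement is true for quasi-compact quasi-separated $X$, but it costs extra work that the proof does not need.

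The genuine gap is in the last step of (ii). You assert that $\hocolim_n K^B(X\times\Spec(\Delta_n))$ \emph{is by definition} Weibel's $KH(X)$. That is only the definition in the affine case. For a general quasi-compact quasi-separated scheme, Weibel's \cite[Definition~6.5]{Weibel-KH} defines $KH(X)$ via \v{C}ech cohomological descent (a hypercohomology-type construction), and its agreement with the naive realization $\hocolim_n K^B(X\times\Spec(\Delta_n))$ is a theorem, not a tautology; the paper closes exactly this step by citing Thomason--Trobaugh \cite[\S9.11]{TT}, which rests on Zariski descent for nonconnective $K$-theory. As written, your argument identifies $KH(\perf_\dg(X))$ with the naive realization but never compares that realization with Weibel's actual definition for non-affine $X$, so the comparison with descent (or a citation of \cite[\S9.11]{TT}) must be added to complete the proof.
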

\begin{proof}
Item (i) follows automatically from Weibel's definition~\cite[Definition~1.1]{Weibel-KH} and from the natural identification $ A \otimes \Delta_n \simeq \Delta_nA$, where $\Delta_nA$ is the coordinate ring $A[t_0, \dots, t_n] / \big(\sum_{i=0}^n t_i -1\big)A$ of ``standard $n$-simplexes'' over $A$. In what concerns item (ii), we have the following weak equivalences of spectra
\begin{eqnarray}
\bbK (\perf_\dg(X) \otimes \Delta_n) & \simeq & \bbK (\perf_\dg(X) \otimes^\bfL \perf_\dg(\mathrm{Spec}(\Delta_n))) \nonumber\\
& \simeq & \bbK (\perf_\dg (X \times \mathrm{Spec} (\Delta_n))) \label{eq:star-11}\\
& \simeq & \bbK (X \times \mathrm{Spec} (\Delta_n)) \nonumber\,,
\end{eqnarray}
where \eqref{eq:star-11} follows from \cite[Proposition~8.2]{Regularity} (in {\em loc. cit.} we assumed $X$ to be separated; however the same result holds with $X$ quasi-separated) since $\mathrm{Spec}(\Delta_n)$ is flat and $\bbK$ is localizing. As a consequence, $\bbK^h(\perf_\dg(X)) \simeq \mathrm{hocolim}_n\, \bbK(X \times \mathrm{Spec}(\Delta_n))$. This latter spectrum is equivalent to the one defined by Weibel in \cite[Definition~6.5]{Weibel-KH} using \v{C}ech's cohomological descent; see Thomason-Trobaugh~\cite[\S9.11]{TT}.\end{proof}
\subsection{Dwyer-Friedlander's {\'e}tale $K$-theory}\label{sub:etale}
Let $l^\nu$ be a prime power with $l$ odd. Assume that $1/l \in k$. Let $K(1)$ be the first Morava $K$-theory spectrum and $L_{K(1)}:\HO(\Sp) \to \HO(\Sp)$ the associated left Bousfield localization; see Mitchell \cite[\S3.3]{Mitchell}. Since $L_{K(1)}$ is triangulated we have the following ${\bf A\!}^1$-localizing invariant
\begin{eqnarray*}
K^{et}(-;\bbZ/l^\nu): \HO(\dgcat) \too \HO(\Sp) && \cA \mapsto L_{K(1)} KH(\cA;\bbZ/l^\nu)\,.
\end{eqnarray*}
We call it the {\em Dwyer-Friedlander {\'e}tale $K$-theory}. This is justified as follows:
\begin{theorem}[Agreement]
Let $X$ be a quasi-compact separated $k$-scheme which is regular and of finite type over $\bbZ[1/l]$, or $\bbQ$, or $\bbF_p$ with $p \neq l$, or $\bbF_p[[t]]$ with $p \neq l$, or $\bbF_p((t))$ with $p \neq l$, or $\bbZ_p^{\wedge}$ with $p \neq l$, or $\bbQ_p^{\wedge}$, or over $\overline{k}$ a separable closed field of characteristic different from $l$. Under these assumptions, $K^{et}(\perf_\dg(X),\bbZ/l^\nu)$ agrees with Dwyer-Friedlander's {\'e}tale $K$-theory of $X$.
\end{theorem}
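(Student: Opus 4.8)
The plan is to deduce the statement from three known inputs, reducing step by step from dg categories to schemes and then from algebraic to {\'e}tale $K$-theory. First, since $X$ is quasi-compact and separated, Proposition~\ref{prop:agreementKH}(ii) identifies $KH(\perf_\dg(X))$ with Weibel's homotopy $K$-theory $KH(X)$ of the scheme $X$; smashing with the mod-$l^\nu$ Moore spectrum and applying the Bousfield localization $L_{K(1)}$ then gives $K^{et}(\perf_\dg(X);\bbZ/l^\nu) \simeq L_{K(1)} KH(X;\bbZ/l^\nu)$. Next, because $X$ is regular, $K$-theory is already $\bfA^{\!1}$-homotopy invariant on $X$ and on each $X\times\Spec(\Delta_n)$, so the simplicial spectrum $n \mapsto \bbK(X\times\Spec(\Delta_n))$ is essentially constant and $KH(X)\simeq\bbK(X)$. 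Hence $L_{K(1)}KH(X;\bbZ/l^\nu)\simeq L_{K(1)}(\bbK(X)/l^\nu)$, and we are reduced to identifying this $K(1)$-localized mod-$l^\nu$ algebraic $K$-theory spectrum with the {\'e}tale $K$-theory of $X$ constructed by Dwyer and Friedlander.

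The second step is to invoke Thomason's descent theorem \cite{Thomason}. The list of allowed base rings in the statement is exactly the one for which Thomason proves that regular finite-type schemes carry a uniform bound on their mod-$l$ {\'e}tale cohomological dimension, so that the descent spectral sequence converges and the Bott-inverted spectrum $\bbK(X)/l^\nu[\beta^{-1}]$ satisfies {\'e}tale hyperdescent, computing the {\'e}tale hypercohomology spectrum $\bbH_{et}(X;\mathbf{K}/l^\nu)$. It then remains to reconcile the two ways of ``inverting Bott'': by Mitchell \cite[\S3.3]{Mitchell} (building on \cite{Thomason}), the functor $L_{K(1)}$, applied to a mod-$l^\nu$ algebraic $K$-theory spectrum of such a scheme, agrees with Bott inversion, i.e. $L_{K(1)}(\bbK(X)/l^\nu)\simeq \bbK(X)/l^\nu[\beta^{-1}]$. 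Chaining these equivalences identifies $K^{et}(\perf_\dg(X);\bbZ/l^\nu)$ with $\bbH_{et}(X;\mathbf{K}/l^\nu)$, which is Dwyer--Friedlander's {\'e}tale $K$-theory \cite{etale1,etale2} (compare also \cite{Friedlander1,Friedlander2}).

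The hard part will be the very last comparison: one has to know that Dwyer--Friedlander's original construction --- built from the {\'e}tale homotopy type of $X$ together with a homotopy-fixed-point/hypercohomology spectrum --- agrees, and not merely on homotopy groups, with Thomason's Bott-inverted theory on the precise class of schemes in the statement. This is why the hypotheses must be (a subset of) Thomason's, and why $l$ is taken odd. A secondary, purely technical point is the bookkeeping with mod-$l^\nu$ coefficients and $L_{K(1)}$: one must check that here $L_{K(1)}$ commutes past $-\wedge^\bfL\bbS/l^\nu$ and that the convergence of the descent spectral sequence (which forces the finiteness-of-cohomological-dimension hypotheses) is genuinely available in each case of the list.

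Overall, I expect no essentially new ingredient to be needed: the proof should be a concatenation of Proposition~\ref{prop:agreementKH} with \cite{Thomason}, \cite{Mitchell}, and \cite{etale1,etale2}, the only real work being to phrase each cited result so that the chain of weak equivalences composes on the nose.
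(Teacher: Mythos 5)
Your proposal is correct and takes essentially the same route as the paper: reduce to the scheme $X$ via Proposition~\ref{prop:agreementKH}(ii), identify mod-$l^\nu$ homotopy $K$-theory with mod-$l^\nu$ algebraic $K$-theory, and then invoke Thomason's comparison theorem \cite[Theorem~4.11]{Thomason} together with the identification of $L_{K(1)}$-localization with Bott inversion (\cite[Remark~4.2 and \S A.14]{Thomason}, as in Mitchell's survey). The only difference is cosmetic: you use regularity to get $KH(X)\simeq \bbK(X)$ integrally, whereas the paper uses the standing hypothesis $1/l\in k$ and \cite[Thm.~9.5]{TT} to get $\bbK(X;\bbZ/l^\nu)\simeq KH(X;\bbZ/l^\nu)$ with finite coefficients; both justifications are valid.
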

\begin{proof}
Since by assumption $1/l \in k$, one has $\bbK(X;\bbZ/l^\nu) \simeq KH(X;\bbZ/l^\nu)$; see \cite[Thm.~9.5]{TT}. Hence, the proof follows from Thomason's celebrated result \cite[Theorem~4.11]{Thomason}; see also \cite[Remark~4.2 and \S A.14]{Thomason}.
\end{proof}

\section{Periodic cyclic homology}\label{sec:HP}
Recall from \cite[\S8-9]{CT1} the construction of periodic cyclic homology
\begin{equation}\label{eq:HP}
HP:\HO(\dgcat) \stackrel{M}{\too} \HO(\cC(\Lambda)) \stackrel{P}{\too} \HO(k[u]\text{-}\mathrm{Comod}) \stackrel{\Hom_\Sp(k[u],-)}{\too} \HO(\Sp) \,.
\end{equation}
Same explanations are in order: $\cC(\Lambda)$ is the Quillen model category of mixed complexes; $M$ is induced by the mixed complex construction; $k[u]\text{-}\mathrm{Comod}$ is the Quillen model category of $k[u]$-comodules (where $k[u]$ is the Hopf algebra of polynomials in one variable $u$ of degree $2$); and finally $P$ is induced by the perioditization construction. When applied to $A$, respectively to $\perf_\dg(X)$, \eqref{eq:HP} agrees with Goodwillie's periodic cyclic homology of $A$, respectively with Weibel's periodic cyclic homology of $X$; see Keller \cite[Theorem~5.2]{ICM-Keller}.
\begin{proposition}\label{prop:HP-invariant}
When $k$ is a field of characteristic zero, the above morphism of derivators $HP$ is ${\bf A\!}^1$-homotopy invariant. 
\end{proposition}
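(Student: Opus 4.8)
The plan is to show that $HP$ inverts the inclusion dg functors $\cA \to \cA[t]$ whenever $k$ is a field of characteristic zero. By the construction of $HP$ in \eqref{eq:HP} as the composite of the mixed complex functor $M$, the perioditization $P$, and $\Hom_\Sp(k[u],-)$, it suffices to analyze what happens at the level of the mixed complex $M(\cA[t])$. Since $M$ takes values in mixed complexes and $P$, $\Hom_\Sp(k[u],-)$ are fixed functors, the key point is the classical fact that for a flat dg category the mixed complex of $\cA[t] = \cA \otimes k[t]$ decomposes compatibly, and that the ``polynomial part'' contributes trivially to periodic cyclic homology in characteristic zero.

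Concretely, the first step is to reduce to a statement about $A = k[t]$ itself: by the Künneth-type behaviour of the mixed complex construction under tensor products of (flat) dg categories — $M(\cA \otimes \cB) \simeq M(\cA) \otimes M(\cB)$, as in Keller \cite{ICM-Keller} — one has $M(\cA[t]) \simeq M(\cA) \otimes M(k[t])$. The second step is then to invoke Goodwillie's theorem that periodic cyclic homology is invariant under nilpotent (and, in characteristic zero, polynomial) extensions; equivalently, that the inclusion $k \hookrightarrow k[t]$ induces a quasi-isomorphism $HP(k) \to HP(k[t])$, which at the mixed complex level says that the reduced mixed complex $\overline{M}(k[t])$ becomes acyclic after perioditization. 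This is where characteristic zero is essential: the de Rham–type computation $HP_\ast(k[t]) \cong k$ concentrated as $HP_\ast(k)$ relies on the Poincaré lemma, i.e. on being able to divide by integers. The third step is to propagate this: tensoring the acyclicity of $P(\overline{M}(k[t]))$ against $M(\cA)$ and applying $\Hom_\Sp(k[u],-)$ shows $HP(\cA) \to HP(\cA[t])$ is an equivalence, using that these operations preserve the relevant weak equivalences.

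An alternative, perhaps cleaner route is to cite directly Keller's agreement statement (already recalled in the excerpt): since $HP$ applied to $A$ agrees with Goodwillie's periodic cyclic homology of $A$, and Goodwillie \cite{Goodwillie} established $\bfA^{\!1}$-homotopy invariance of $HP$ for $k$-algebras in characteristic zero, it remains only to lift this from algebras to arbitrary dg categories. For that lift one uses that every dg category is, up to Morita equivalence, built from algebras, and that $HP$ (being a localizing invariant, cf. \cite{CT1}) commutes with the relevant homotopy colimits; combined with the monoidal/Künneth property of $M$, the invariance for $\cA[t]$ follows formally from the invariance for $k[t]$.

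The main obstacle I expect is bookkeeping the flatness and derived-tensor hypotheses so that $M(\cA \otimes k[t]) \simeq M(\cA) \otimes^\bfL M(k[t])$ genuinely holds on the nose at the level of mixed complexes (rather than only after passing to some localization), together with checking that perioditization $P$ and $\Hom_\Sp(k[u],-)$ interact correctly with this tensor decomposition — in particular that $P$ of a tensor product computes as expected, which is a mild subtlety because $P$ is a completion-type construction. Once that compatibility is in place, the characteristic-zero input is the standard (and short) Hochschild/de Rham computation for $k[t]$, so the conceptual heart of the proof is really just Goodwillie's theorem repackaged.
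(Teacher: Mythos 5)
Your primary route is the same strategy as the paper's: decompose $HP(\cA\otimes k[t])$ by a K\"unneth formula and reduce to the single computation that $HP(k)\to HP(k[t])$ is an isomorphism in characteristic zero. However, the step you defer as the ``main obstacle'' --- that the perioditization $P$ and $\Hom_\Sp(k[u],-)$ interact correctly with the tensor decomposition of mixed complexes --- is not mild bookkeeping; it is the entire mathematical content of the proof. Keller's monoidality gives $M(\cA\otimes k[t])\simeq M(\cA)\otimes M(k[t])$, but perioditization is a product/completion-type construction and does \emph{not} commute with tensoring against an arbitrary (unbounded) mixed complex, so acyclicity of $P(\overline{M}(k[t]))$ does not by itself propagate to $P\bigl(M(\cA)\otimes\overline{M}(k[t])\bigr)$. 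The paper closes exactly this gap by invoking Kassel: the algebras $k$ and $k[t]$ satisfy Kassel's finiteness property (P) (their mixed complexes are quasi-isomorphic to bounded, degreewise finitely generated projective ones, e.g.\ the two-term de Rham mixed complex for $k[t]$ in characteristic zero), and Kassel's Theorem~3.10 then yields the K\"unneth isomorphism $HP(\cA\otimes k[t])\simeq HP(\cA)\otimes HP(k[t])$ at the level of periodic cyclic homology, after which Kassel's Corollary~3.12 and (3.13) give $HP(k)\simeq HP(k[t])$. So your outline is fillable, but only by supplying precisely this finiteness input, which your write-up names without resolving.

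Your ``alternative, cleaner route'' does not work as stated: it rests on $HP$ ``commuting with the relevant homotopy colimits'' when passing from algebras to arbitrary dg categories, but $HP$ is defined via infinite products and does \emph{not} preserve filtered homotopy colimits --- this is emphasized in the paper immediately after the proposition, and is the very reason $HP^{\mathrm{flt}}$ has to be introduced. Hence one cannot deduce the statement for a general dg category $\cA$ from Goodwillie's invariance for algebras by writing $\cA$ as a homotopy colimit of algebra-like pieces; the K\"unneth argument (your first route, completed via Kassel) is the correct one.
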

\begin{proof}
Kassel's property {\it (P)} (see \cite[page~211]{Kassel}) is clearly verified by the $k$-algebras $k$ and $k[t]$. Therefore, \cite[Theorem~3.10]{Kassel} gives rise to the isomorphisms 
\begin{eqnarray*}
HP(\cA \otimes k) \simeq HP(\cA) \otimes HP(k) &&HP(\cA \otimes k[t]) \simeq HP(\cA) \otimes HP(k[t])\,.
\end{eqnarray*} 
This implies that \eqref{eq:HP} is ${\bf A\!}^1$-homotopy invariant if and only if $HP(k) \to HP(k[t])$ is an isomorphism. Since by assumption $k$ is a field of characteristic zero, Kassel's ${\bf A\!}^1$-homotopy invariance results (see \cite[Corollary 3.12 and (3.13)]{Kassel}) allow us to conclude that this is indeed the case. This achieves the proof.
\end{proof}
Since periodic cyclic homology is defined using infinite products, $HP$ does {\em not} preserve filtered homotopy colimits. The problem is that $k[u]$ is {\em not} a compact object of $\HO(k[u]\text{-}\mathrm{Comod})$. As a consequence, $HP$ is {\em not} an additive invariant. Making use of Proposition \eqref{prop:general} below we obtain nevertheless an ${\bf A\!}^1$-additive invariant (when $k$ is a field of characteristic zero)
\begin{eqnarray*}
HP^{\mathrm{flt}}: \HO(\dgcat) \too \HO(\Sp) && \cA \mapsto HP^{\mathrm{flt}}(\cA)
\end{eqnarray*}
and a $2$-morphism $\epsilon: HP^{\mathrm{flt}} \Rightarrow HP$.
\begin{proposition}\label{prop:general}
Given any derivator $\bbD$, one has an adjunction of categories
\begin{equation}\label{eq:adj-desired}
\xymatrix{
\uHom(\HO(\dgcat),\bbD) \ar@<1ex>[d]^-{(-)^{\mathrm{flt}}} \\
*+<3ex>{\uHom_{\mathsf{flt}}(\HO(\dgcat),\bbD)} \ar@{^{(}->}@<1ex>[u]}
\end{equation}
Given $E \in \uHom(\HO(\dgcat),\bbD)$, the following holds:
\begin{itemize}
\item[(i)] The evaluation of the counit $2$-morphism $\epsilon: E^{\mathrm{flt}} \Rightarrow E$ at every homotopically finitely presented dg category is an isomorphism;
\item[(ii)] When $E$ sends split short exact sequences to direct sums, $E^{\mathrm{flt}}$ is additive;
\item[(iii)] When $E$ is ${\bf A\!}^1$-homotopy invariant, $E^{\mathrm{flt}}$ is also ${\bf A\!}^1$-homotopy invariant.
\end{itemize}
\end{proposition}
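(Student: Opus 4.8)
The plan is to realize $(-)^{\mathrm{flt}}$ as a coreflection: I will exhibit it as the right adjoint to the fully faithful, colimit-preserving inclusion $\uHom_{\mathsf{flt}}(\HO(\dgcat),\bbD)\hookrightarrow\uHom(\HO(\dgcat),\bbD)$, the counit of this adjunction being the $2$-morphism $\epsilon$. Intuitively, ``forcing $E$ to preserve filtered homotopy colimits'' should amount to discarding $E$ away from the homotopically finitely presented dg categories and re-extending by filtered homotopy colimits. Concretely, given $E\in\uHom(\HO(\dgcat),\bbD)$ and a dg category $\cA$, I set
\[
E^{\mathrm{flt}}(\cA):=\hocolim_{(\cC\to\cA)}E(\cC)\,,
\]
the homotopy colimit over the category whose objects are the homotopically finitely presented dg categories $\cC$ equipped with a morphism $\cC\to\cA$ in $\Ho(\dgcat)$. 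I would use three facts: (a) the homotopically finitely presented dg categories are exactly the retracts in $\Ho(\dgcat)$ of the finite dg cells \cite[Proposition~5.2]{Additive}, and they are stable under finite homotopy colimits and under $-\otimes^\bfL-$; (b) by definition they are the compact objects of $\HO(\dgcat)$, so $\Map(\cC,-)$ commutes with filtered homotopy colimits; (c) every dg category is a filtered homotopy colimit of finite dg cells (cellularity of the Morita model structure, \cite{Additive}). Fact (a) makes each indexing category $(\cC\to\cA)$ essentially small and filtered (use finite homotopy colimits of such dg categories to produce cocones), so that $E^{\mathrm{flt}}(\cA)$ is a bona fide filtered homotopy colimit in $\bbD$, and $2$-functorial in $\cA$ by the coherence formalism for derivators; this yields the morphism of derivators $E^{\mathrm{flt}}$ together with the $2$-morphism $\epsilon:E^{\mathrm{flt}}\Rightarrow E$ assembled from the structure maps $E(\cC)\to E(\cA)$.

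Next I would verify the items and the adjunction. For (i): when $\cA$ is itself homotopically finitely presented the indexing category has the terminal — hence homotopy final — object $(\cA,\id_\cA)$, so $E^{\mathrm{flt}}(\cA)\simeq E(\cA)$ and $\epsilon_\cA$ is an isomorphism. That $E^{\mathrm{flt}}$ preserves filtered homotopy colimits is fact (b): for $\cA=\hocolim_j\cA_j$, compactness of each $\cC$ makes the canonical functor $\mathrm{colim}_j(\cC\to\cA_j)\to(\cC\to\cA)$ homotopy cofinal, whence $E^{\mathrm{flt}}(\cA)\simeq\hocolim_jE^{\mathrm{flt}}(\cA_j)$; thus $E^{\mathrm{flt}}$ lies in $\uHom_{\mathsf{flt}}$. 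For the adjunction, fact (c) implies that a $2$-morphism out of any morphism of derivators preserving filtered homotopy colimits is determined by its restriction to the homotopically finitely presented dg categories (write the source on $\cA=\hocolim_j\cC_j$ and invoke naturality). Since $E^{\mathrm{flt}}$ preserves filtered homotopy colimits and restricts to $E$ on the homotopically finitely presented dg categories, every $\phi\colon F\Rightarrow E$ with $F\in\uHom_{\mathsf{flt}}$ factors uniquely as $\phi=\epsilon\circ\widetilde\phi$ with $\widetilde\phi\colon F\Rightarrow E^{\mathrm{flt}}$, and the assignments $\phi\mapsto\widetilde\phi$ and $\psi\mapsto\epsilon\circ\psi$ are mutually inverse (both sides are pinned down on the homotopically finitely presented dg categories, where $\epsilon$ is the identity); naturality in $F$ and $E$ being routine, this is the adjunction \eqref{eq:adj-desired}.

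Item (iii) is then quick. First, $k[t]$ is a finite dg cell: indeed $k[t]=k\sqcup^h_{\dgS(-1)}\dgD(0)$, the attachment of a single cell $\iota(0)$ along the canonical map $\dgS(-1)\to k$ (which sends $S^{-1}$ to $0$). Hence $k[t]$ is homotopically finitely presented, so by fact (a) together with \cite[Proposition~3.3]{CT1} the endofunctor $-\otimes^\bfL k[t]$ of $\HO(\dgcat)$ preserves both homotopically finitely presented dg categories and filtered homotopy colimits. Writing $\cA=\hocolim_j\cC_j$ with the $\cC_j$ homotopically finitely presented, we therefore get $\cA[t]=\hocolim_j\cC_j[t]$ with the $\cC_j[t]$ homotopically finitely presented, and $E^{\mathrm{flt}}(\cA)\to E^{\mathrm{flt}}(\cA[t])$ identifies with the filtered homotopy colimit of the maps $E(\cC_j\to\cC_j[t])$, which are isomorphisms since $E$ is $\bfA^{\!1}$-homotopy invariant; hence $E^{\mathrm{flt}}$ is $\bfA^{\!1}$-homotopy invariant. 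For (ii): split short exact sequences of dg categories are stable under filtered homotopy colimits (the splittings pass to the colimit), and — filtering $\cA$, $\cB$ and the gluing bimodule by homotopically finitely presented data as in \cite[\S13]{Additive} — every split short exact sequence is a filtered homotopy colimit of split short exact sequences of homotopically finitely presented dg categories. Since $E^{\mathrm{flt}}$ agrees with $E$ on the latter, $E$ sends them to direct sums, and finite direct sums commute with filtered homotopy colimits in the additive derivator $\bbD$, it follows that $E^{\mathrm{flt}}$ sends split short exact sequences to direct sums.

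I expect the main obstacle to be twofold. First, the derivator bookkeeping behind $E^{\mathrm{flt}}$: one must check that the comma categories $(\cC\to\cA)$ are essentially small, that the homotopy colimit defining $E^{\mathrm{flt}}$ is genuinely $2$-functorial in $\cA$, and that fact (c) identifies $\HO(\dgcat)$ with the Ind-completion of its homotopically finitely presented objects in precisely the sense that pins down the adjunction on the nose — this is the real engine of the statement. Second, and the only step that is not purely formal, is the claim used in (ii) that an arbitrary split short exact sequence is a filtered homotopy colimit of split short exact sequences between homotopically finitely presented dg categories: this requires a filtration argument compatible with \emph{both} splittings, which I would carry out along the lines of the finite-approximation arguments in \cite{Additive}.
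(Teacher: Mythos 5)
Your overall strategy is the right one, and it is conceptually the same as the paper's: realize $(-)^{\mathrm{flt}}$ as the coreflection onto $\uHom_{\mathsf{flt}}$ which agrees with $E$ on the homotopically finitely presented objects and is extended by filtered homotopy colimits; items (i)--(iii) are then handled exactly as in the paper (retracts of finite dg cells for (i); approximation of split short exact sequences by ones with finite dg cell components for (ii) --- note this is not something you need to re-prove ``along the lines of'' the finite-approximation arguments: it is literally \cite[Proposition~13.2]{Additive}; and writing $\cA\to\cA[t]$ as a filtered homotopy colimit of $\cB_j\to\cB_j[t]$ for (iii), where your cell-attachment description of $k[t]$ and the closure of homotopically finitely presented dg categories under $-\otimes^\bfL-$ are correct and even make explicit a point the paper leaves implicit).

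The genuine gap is in the construction itself, i.e.\ exactly where you locate ``the real engine.'' The formula $E^{\mathrm{flt}}(\cA):=\hocolim_{(\cC\to\cA)}E(\cC)$ is not well defined in the derivator formalism as stated: the comma category is formed in $\Ho(\dgcat)$, so the assignment $(\cC\to\cA)\mapsto E(\cC)$ is only a diagram in the base category of $\bbD$, not an object of $\bbD\big((\cC\to\cA)^{\op}\big)$, and a derivator has no homotopy colimit of such an incoherent diagram; moreover filteredness of this homotopy-level comma category (coequalizing two maps over $\cA$ requires coherent choices that $\Ho(\dgcat)$ does not provide), $2$-functoriality in $\cA$, and compatibility with all index categories $I$ (needed for $E^{\mathrm{flt}}$ to be a morphism of derivators) are all unaddressed. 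The same issue affects the adjunction: your argument shows at best that a factorization $\phi=\epsilon\circ\widetilde\phi$ is unique if it exists, but constructing $\widetilde\phi$ from ``$F$ preserves filtered homotopy colimits and $\cA=\hocolim_j\cC_j$'' requires a coherent, presentation-independent identification of $\HO(\dgcat)$ as the filtered-colimit completion of the finite dg cells --- which is precisely the content of the machinery the paper invokes: the small category $\dgcat_f$, the localized presheaf derivator $\HO(L_S\!\Fun(\dgcat_f^\op,\sSet))$ with the adjunction $(\mathrm{Re},\mathsf{h})$, and the equivalences and adjunction of \cite[Theorem~3.1, Lemma~3.2, Theorem~5.13]{Additive}, from which $E^{\mathrm{flt}}=\overline{E\circ i}\circ\mathsf{h}$ and the adjunction \eqref{eq:adj-desired} are obtained by concatenation. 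To close your proof you would either have to cite these results (at which point your construction coincides with the paper's) or rebuild an equivalent rectification/Ind-completion argument, which is substantial and is not supplied by the proposal.
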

\begin{proof}
We start by constructing the right adjoint $(-)^{\mathrm{flt}}$. Recall from \cite[\S5]{Additive} that we have the following diagram
$$
\xymatrix{
\dgcat_f[S^{-1}] \ar[d]_-h \ar[r]^-i & *+<2ex>{\HO(\dgcat)}  \ar@<1ex>[dl]^-{\mathsf{h}}\\
*+<3ex>{\HO(L_S\!\Fun(\dgcat_f^\op, \sSet))} \ar@<1ex>[ur]^-{\mathrm{Re}} & 
}
$$
with $\mathsf{h} \circ i \simeq h$ and $\mathrm{Re} \circ h \simeq i$. Some explanations are in order: $\dgcat_f$ is the (essentially) small subcategory of $\dgcat$ obtained by stabilizing the finite dg cells with respect to fibrant and cosimplicial cofibrant resolutions; $S$ is the set of Morita equivalences in $\dgcat_f$; $\dgcat_f[S^{-1}]$ is the associated prederivator (see \cite[\S A.1]{CT1}); $h$ is induced by the Yoneda embedding; $\Fun(\dgcat_f^\op, \sSet)$ is endowed with the projective Quillen model structure and $L_S\!\Fun(\dgcat_f^\op, \sSet)$ is its left Bousfield localization with respect to the image of $S$ under $h$; $\mathsf{h}$ is fully-faithful and preserves filtered homotopy colimits; and finally $(\mathrm{Re},\mathsf{h})$ is an adjunction. This latter adjunction gives automatically rise to the following one (with $\mathsf{h}^\ast$ fully-faithful)
\begin{equation}\label{eq:adj-1}
\xymatrix{
\uHom(\HO(\dgcat),\bbD) \ar@<1ex>[d]^-{\mathrm{Re}^\ast} \\
\uHom(\HO(L_S\!\Fun(\dgcat_f^\op, \sSet)), \bbD) \ar@<1ex>[u]^-{\mathsf{h}^\ast}\,.
}
\end{equation}
Thanks to \cite[Theorem~3.1]{Additive}, we have the induced equivalence
\begin{equation}\label{eq:equivalence-key}
h^\ast: \uHom_!(\HO(L_S\!\Fun(\dgcat_f^\op, \sSet)),\bbD) \stackrel{\sim}{\too} \uHom(\dgcat_f[S^{-1}],\bbD)\,.
\end{equation}
Moreover, \cite[Lemma~3.2]{Additive} gives rise to the following adjunction
\begin{equation}\label{eq:adj-2}
\xymatrix{
\uHom(\HO(L_S\!\Fun(\dgcat_f^\op, \sSet)), \bbD) \ar@<1ex>[d]^-{\Psi}  & E' \ar@{|->}[d]\\
*+<3ex>{\uHom_!(\HO(L_S\!\Fun(\dgcat_f^\op, \sSet)), \bbD)} \ar@{^{(}->}@<1ex>[u] & \Psi(E'):= \overline{E' \circ h}\,,
}
\end{equation}
where $\overline{E' \circ h}$ is the unique homotopy colimit preserving morphism of derivators corresponding to $E' \circ h$ under the above equivalence \eqref{eq:equivalence-key}. As proved in \cite[Theorem~5.13]{Additive}, we have also the following induced equivalence
\begin{equation}\label{eq:equivalence-key2}
\mathsf{h}^\ast: \uHom_!(\HO(L_S\!\Fun(\dgcat_f^\op, \sSet)), \bbD)  \stackrel{\sim}{\too} \uHom_{\mathsf{flt}}(\HO(\dgcat),\bbD)\,.
\end{equation}
By concatenating \eqref{eq:adj-1} with \eqref{eq:adj-2}-\eqref{eq:equivalence-key2}, one hence obtains the desired adjunction \eqref{eq:adj-desired}. Making use of $\mathrm{Re} \circ h \simeq i$, one observes that the right adjoint functor $(-)^{\mathrm{flt}}:=\mathsf{h}^\ast \circ \Psi \circ \mathrm{Re}^\ast$ sends a morphism of derivators $E:\HO(\dgcat) \to \bbD$ to $E^{\mathrm{flt}}:= \overline{E \circ i} \circ \mathsf{h}$.

We now have all the ingredients needed for the proof of items (i)-(iii). Making use of $\mathsf{h} \circ i = h$, one observes that the evaluation of the counit $2$-morphism $\epsilon: E^{\mathrm{flt}} \Rightarrow E$ at every dg category $\cA \in \dgcat_f$ is an isomorphism. Since the homotopically finitely presented dg categories are retracts (in the homotopy category $\Ho(\dgcat)$) of finite dg cells, we hence obtain item (i). As proved in \cite[Proposition~13.2]{Additive}, every split short exact sequence of dg categories is Morita equivalent to a filtered homotopy colimit of split short exact sequences whose components are finite dg cells. By combining this fact with item (i) and with the fact $E^{\mathrm{flt}}$ preserves filtered homotopy colimits, we obtain item (ii). Finally, item (iii) follows from item (i), from the fact that $E^{\mathrm{flt}}$ preserves filtered homotopy colimts, and from Lemma~\ref{lem:keyfilt} below.
\end{proof}
 \begin{lemma}\label{lem:keyfilt}
Given a dg category $\cA$, there exists a filtered direct system of finite dg cells $\{\cB_j\}_{j \in J}$ such that
\begin{equation}\label{eq:iso-A1}
\mathrm{hocolim}_j\, (\cB_j \to \cB_j[t]) \stackrel{\sim}{\too} (\cA \to \cA[t])\,.
\end{equation}
\end{lemma}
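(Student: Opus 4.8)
The plan is to realise both ``ends'' of the arrow $\cA \to \cA[t]$ as filtered homotopy colimits of finite dg cells and to glue the two colimits together along the morphism $\iota$. First I would recall that every dg category $\cA$ is isomorphic, in $\HO(\dgcat)$, to a filtered homotopy colimit $\hocolim_{j \in J}\, \cB_j$ of a filtered direct system of finite dg cells; see \cite[\S5]{Additive} (this is the ``single dg category'' analogue of \cite[Proposition~13.2]{Additive}, already invoked in the proof of Proposition~\ref{prop:general}(ii)). Concretely one cofibrantly replaces $\cA$ in the Morita model structure, writes the cofibrant model as a transfinite composition of pushouts along the generating cofibrations $\iota(n)$, $n \in \bbZ$, and $\emptyset \to k$, and takes for $J$ the filtered poset of its finite sub-cell-dg-categories ordered by inclusion: every $\cB_j$ is then a finite dg cell, the transition maps $\cB_j \to \cB_{j'}$ are cofibrations, and the colimit computes the homotopy colimit and agrees with $\cA$.

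Next I would analyse the functor $-\otimes k[t]$. Since $k[t]$ is a flat $k$-algebra, tensoring by it descends to a morphism of derivators $P := -\otimes k[t] \colon \HO(\dgcat) \to \HO(\dgcat)$ (compare with \eqref{eq:Delta}), and $P$ preserves filtered homotopy colimits because the symmetric monoidal structure on $\HO(\dgcat)$ is homotopy colimit preserving by \cite[Proposition~3.3]{CT1} --- precisely the input used in the proof of Proposition~\ref{prop:A1-homot}(iii). In particular $\cA[t] \simeq \hocolim_{j \in J}\, \cB_j[t]$; note that the $\cB_j[t]$ need not themselves be finite dg cells, in agreement with the statement of the lemma, which only constrains the $\cB_j$.

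To assemble the arrow, I would use that the inclusion $\iota \colon k \hookrightarrow k[t]$ induces a $2$-morphism $\iota \colon \id \Rightarrow P$ between morphisms of derivators $\HO(\dgcat) \to \HO(\dgcat)$; equivalently, the data $(\id, P, \iota)$ determines a single morphism of derivators
\[
G \colon \HO(\dgcat) \too \HO(\dgcat)^{[1]}, \qquad \cA \longmapsto \bigl(\cA \stackrel{\iota}{\too} \cA[t]\bigr),
\]
with values in the arrow derivator $\HO(\dgcat)^{[1]}(I) := \HO(\dgcat)([1] \times I)$ (here $[1]$ denotes the category $0 \to 1$). Homotopy colimits in $\HO(\dgcat)^{[1]}$ indexed by a filtered category are computed objectwise over $[1]$, hence in $\HO(\dgcat)$; since both $\id$ and $P$ preserve filtered homotopy colimits, so does $G$. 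Applying $G$ to the presentation of the first paragraph then yields
\[
\bigl(\cA \stackrel{\iota}{\too} \cA[t]\bigr) = G(\cA) \simeq G\bigl(\hocolim_{j \in J} \cB_j\bigr) \simeq \hocolim_{j \in J}\, G(\cB_j) = \hocolim_{j \in J}\, \bigl(\cB_j \stackrel{\iota}{\too} \cB_j[t]\bigr),
\]
which is exactly the isomorphism \eqref{eq:iso-A1}.

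The main obstacle, I expect, is the very first step: presenting an arbitrary dg category \emph{functorially} as a filtered homotopy colimit of finite dg cells. This requires care about the generating cofibrations of the Morita model structure and about why the relevant filtered colimit of cofibrations computes the homotopy colimit; but it is available from \cite{Additive} --- being the same mechanism that underlies \cite[Proposition~5.2]{Additive} and \cite[Proposition~13.2]{Additive} --- and, granting it, the remaining steps are routine formal manipulations with derivators and monoidal structures.
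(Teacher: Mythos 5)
Your proposal is correct and follows essentially the same route as the paper: present $\cA$ as a filtered homotopy colimit of finite dg cells (the paper simply cites \cite[Proposition~3.6(iii)]{CT} for this, where you sketch the cell-complex argument), and use flatness of $k[t]$ together with the homotopy colimit preserving monoidal structure to commute $-\otimes k[t]$ past the colimit. Your extra care in assembling the two colimits via the arrow derivator is just an explicit rendering of the paper's ``combining these two facts.''
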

\begin{proof}
As proved in \cite[Proposition 3.6(iii)]{CT}, there exists a filtered direct system of finite dg cells $\{\cB_j\}_{j\in J}$ such that $\mathrm{hocolim}_j\,\cB_j  \simeq \cA$. Since the $k$-algebra $k[t]$ is flat, the functor $-\otimes k[t]$ preserves filtered homotopy colimits. Hence, by combining these two facts, we obtain the desired isomorphism \eqref{eq:iso-A1}.
\end{proof}
\section{Proof of Theorem~\ref{thm:main}}\label{sec:proof-main1}
We will focus ourselves in the localizing case. The proof of the additive case is similar. Recall from \cite[\S10]{Additive} the construction of the universal localizing invariant
\begin{equation*}
U_\loc : \HO(\dgcat) \too \Mloc\,.
\end{equation*}
Given any triangulated derivator $\bbD$, one has an induced equivalence of categories
\begin{equation}\label{eq:equiv-Uloc}
(U_\loc)^\ast: \HomC(\Mloc, \bbD) \stackrel{\sim}{\too} \uHom_\loc(\HO(\dgcat),\bbD)\,.
\end{equation}
\begin{remark}{(Quillen model)}\label{rk:Qmodel}
Consider the category $\Fun(\dgcat_f^\op,\Sp)$ endowed with the projective Quillen model structure; recall from the proof of Proposition~\ref{prop:general} the definition of the category $\dgcat_f$. As explained in \cite[\S10-11]{Additive}, $\Mloc$ admits a left proper cellular Quillen model $\Mloc^Q$ given by the left Bousfield localization of $\Fun(\dgcat_f^\op,\Sp)$ with respect to a set $\loc$ of morphisms which implement the localizing property. Moreover, $U_\loc$ is induced by the~functor
\begin{eqnarray*}
\dgcat \too \Mloc^Q && \cA \mapsto \big(\cB \mapsto \Sigma^\infty (N w \rep_\dg(\cB,\cA)_+) \big)\,,
\end{eqnarray*}
where $w\rep_\dg(\cB,\cA)$ stands for the category of quasi-isomorphisms of $\rep_\dg(\cB,\cA)$, $Nw\rep_\dg(\cB,\cA)$ for its nerve, and $\Sigma^\infty(-_+)$ for the suspension spectrum.
\end{remark}
Following \cite[\S A.7]{CT1}, one can consider the left Bousfield localization of $\Mloc^Q$ with respect to the following set of maps
\begin{equation*}
\mathsf{S}:=\{\Omega^n(U_\loc(\cB \to \cB[t])) \, |\, \cB \,\, \mathrm{finite} \,\, \mathrm{dg} \,\, \mathrm{cell}, n \geq 0\}\,,
\end{equation*}
where $\Omega$ stands for desuspension. Thanks to \cite[Theorem~A.4 and Proposition~A.6]{CT1}, we obtain a well-defined triangulated derivator $\Mloc^{{\bf A\!}^1}$ (admiting a Quillen model $\Mloc^{{\bf A\!}^1\!,Q}:=L_{\mathsf{S},\loc}\!\Fun(\dgcat_f^\op,\Sp)$) as well as an adjunction
$$
\xymatrix{
\Mloc \ar@<-1ex>[d]_{l_!} \\
\Mloc^{\bfA^{\!1}} \ar@<-1ex>[u]_{l^*}\,.
}
$$
The theory of left Bousfield localization (see \cite[\S A.7]{CT1}) implies that
\begin{equation}\label{eq:11}
(l_!)^\ast: \uHom_!(\Mloc^{\bfA^{\!1}},\bbD) \stackrel{\sim}{\too} \uHom_{!,\mathsf{S}}(\Mloc,\bbD)\,,
\end{equation}
where the right-hand-side denotes the category of homotopy colimit preserving morphisms of derivators which invert the elements of $\mathsf{S}$. Since $U_\loc$ preserves filtered homotopy colimits one concludes then from Lemma~\ref{lem:keyfilt} that \eqref{eq:equiv-Uloc} restricts to
\begin{equation}\label{eq:22}
(U_\loc)^\ast: \uHom_{!,S} (\Mloc,\bbD) \stackrel{\sim}{\too} \uHom_{\loc, {\bf A\!}^1}(\HO(\dgcat),\bbD)\,.
\end{equation}
Finally, by combining \eqref{eq:11}-\eqref{eq:22} we obtain the desired equivalence \eqref{eq:equiv-ULoc-main}.

Let us now prove the second claim. Recall from \cite[Theorem~8.5]{CT1} that $\Mloc$ carries an homotopy colimit preserving symmetric monoidal structure making $U_\loc$ symmetric monoidal. Given any triangulated derivator $\bbD$, endowed with an homotopy colimit preserving symmetric monoidal structure, one has an induced equivalence (which is a $\otimes$-enhancement of \eqref{eq:equiv-Uloc})
\begin{equation}\label{eq:4}
(U_\loc)^\ast: \uHom^\otimes_!(\Mloc,\bbD) \stackrel{\sim}{\too} \uHom^\otimes_{\loc}(\HO(\dgcat),\bbD)\,,
\end{equation}
where the left-hand-side denotes the category of symmetric monoidal homotopy colimit preserving morphisms of derivators and the right-hand-side the category of symmetric monoidal ${\bf A\!}^1$-localizing invariants.
\begin{remark}{(Symmetric monoidal Quillen model)}\label{rk:symmetric}
Recall from \cite[\S8.1]{CT1} the construction of the (essentially) small category $\dgcatf$. This full subcategory of $\dgcat_f$ is symmetric monoidal and every object of $\dgcat_f$ is Morita equivalence to an object in $\dgcatf$. Hence, as explained in {\em loc. cit.}, $L_{\loc}\!\Fun(\dgcatf^\op,\Spt)$ (endowed with the Day convolution product) is a symmetric monoidal Quillen model $\Mloc^{Q,\otimes}$ of $\Mloc$. Moreover, the following functor
\begin{eqnarray}\label{eq:functor-monoidal}
\dgcat \too \Mloc^{Q,\otimes} && \cA \mapsto (\cB \mapsto \Sigma^\infty (Nw \rep_\dg(\cB,\cA)_+))\,,
\end{eqnarray}
with $\Sigma^\infty(-_+)$ taking values in symmetric spectra, is symmetric monoidal. 
\end{remark}
Let us now verify that for every noncommutative motive $N$ the functor $N \otimes^\bfL - : \Mloc^{Q,\otimes} \to \Mloc^{Q,\otimes}$ sends the elements of $\mathsf{S}$ to $\mathsf{S}$-local weak equivalences. The category $\Mloc^{Q,\otimes}$ is generated by the noncommutative motives of the form $U_\loc(\cA)$, with $\cA$ a dg category, and the Day convolution product is homotopy colimit preserving. Hence, it suffices to show that the functors $U_\loc(\cA) \otimes^\bfL -$ send the elements of $\mathsf{S}$ to the $\mathsf{S}$-local weak equivalences. This is indeed the case since
$$ U_\loc(\cA) \otimes^\bfL \Omega^n\big(U_\loc(\cB \to \cB[t])\big) \simeq \Omega^n U_\loc\big((\cA\otimes^\bfL \cB) \to (\cA\otimes^\bfL\cB)[t] \big)\,. $$
Thanks to \cite[Proposition~6.6]{CT1} (recall from the proof of \cite[Theorem~8.5]{CT1} that all the remaining conditions of this proposition are already satisfied) we obtain a well-defined symmetric monoidal Quillen model category $\Mloc^{{\bf A\!}^1,Q,\otimes}$. Consequently, \cite[Propositions A.2 and A.9]{CT1} imply that $\Mloc^{{\bf A\!}^1}$ carries an homotopy colimit preserving symmetric monoidal structure, that $l_!$ is symmetric monoidal, and that we have an induced equivalence
\begin{equation}\label{eq:5}
(l_!)^\ast: \uHom_!^\otimes(\Mloc^{\bfA^{\!1}},\bbD) \stackrel{\sim}{\too} \uHom_{!,\mathsf{S}}^\otimes (\Mloc,\bbD)\,.
\end{equation}
Since $U_\loc$ is symmetric monoidal and preserves filtered homotopy colimits one concludes once again from Lemma~\ref{lem:keyfilt} that \eqref{eq:4} restricts to
\begin{equation}\label{eq:6}
(U_\loc)^\ast: \uHom_{!,\mathsf{S}}^\otimes(\Mloc,\bbD) \stackrel{\sim}{\too} \uHom_{\loc, {\bf A\!}^1}^\otimes (\HO(\dgcat),\bbD)\,.
\end{equation}
 Finally, by combining \eqref{eq:5}-\eqref{eq:6} one obtains the desired $\otimes$-enhancement of \eqref{eq:equiv-ULoc-main}
\begin{equation}\label{eq:monoidal-last}
(U^{{\bf A\!}^1}_\loc)^{\ast}:\ \uHom_!^\otimes(\Mot^{{\bf A\!}^1}_\loc, \bbD) \stackrel{\sim}{\too} \uHom^\otimes_{\loc,{\bf A\!}^1}(\HO(\dgcat), \bbD)\,.
\end{equation}
It remains only to show that the symmetric monoidal structure on $\Mloc^{{\bfA\!}^1}$ is closed. By construction, the Quillen model $\Mloc^{{\bfA\!}^1,Q,\otimes}$ is combinatorial in the sense of Smith, \ie it is cofibrantly generated and the underlying category is locally presentable. Following Rosicky \cite[Proposition~6.10]{Rosicky}, we conclude that the triangulated base category $\Mloc^{{\bfA\!}^1}(e)$ is well-generated in the sense of Neeman. Given any noncommutative motive $N$, the functor $-\otimes^\bfL N: \Mloc^{{\bfA\!}^1}(e) \to \Mloc^{{\bfA\!}^1}(e)$ is triangulated and preserves arbitrary coproducts. Hence, thanks to Neeman \cite[Theorem~8.4.4]{Neeman}, it admits a right adjoint $\mathsf{R} \Hom(N,-)$ which by definition is the internal-Hom functor. This implies that the symmetric monoidal structure is closed.
\section{Proof of Theorem~\ref{thm:main2}}\label{sec:proof-main2}
Similarly to the proof of Theorem~\ref{thm:main}, we will focus ourselves in the localizing case, \ie in the proof of weak equivalences \eqref{eq:star-3}-\eqref{eq:star-4}. As explained in Remark~\ref{rk:symmetric}, the Quillen model $\Mloc^{Q,\otimes}$ carries an homotopy colimit preserving symmetric monoidal structure and the functor \eqref{eq:functor-monoidal} is symmetric monoidal. Thanks to Proposition~\ref{prop:A1-homot}, we obtain then a well-defined symmetric monoidal ${\bf A\!}^1$-localizing invariant $U_\loc^h:\HO(\dgcat) \to \Mloc$ and a $2$-morphism $\eta: U_\loc \Rightarrow U_\loc^h$. Consequenty, equivalence \eqref{eq:monoidal-last} gives rise to a symmetric monoidal homotopy colimit preserving morphism $\overline{U_\loc^h}: \Mloc^{{\bf A\!}^1} \to \Mloc$ such that $\overline{U_\loc^h} \circ U_\loc^{{\bf A\!}^1} \simeq U_\loc^h$. The proof  of \eqref{eq:star-3} follows now from the following weak equivalences of spectra
\begin{eqnarray}
\Hom_\Sp(U_\loc^{\bfA^{\!1}}(\cA),U_\loc^{\bfA^{\!1}}(\cB)) & \simeq & \Hom_\Sp(U_\loc(\cA), (l^{\ast}\circ U_\loc^{\bfA^{\!1}})(\cB)) \nonumber\\
& \simeq & \Hom_\Sp(U_\loc(\cA), (\overline{U_\loc^h}\circ U_\loc^{\bfA^{\!1}})(\cB))\label{eq:1} \\
& \simeq & \Hom_\Sp(U_\loc(\cA), \mathrm{hocolim}_n\,U_\loc(\cB\otimes \Delta_n))\nonumber \\
& \simeq &  \mathrm{hocolim}_n\,\Hom_\Sp(U_\loc(\cA),U_\loc(\cB\otimes \Delta_n)) \label{eq:2} \\
& \simeq &  \mathrm{hocolim}_n\,\bbK(\cA^\op \otimes^\bfL (\cB \otimes \Delta_n)) \label{eq:3} \\
& = &  \bbK^h(\cA^\op \otimes^\bfL \cB)=: KH(\cA^\op \otimes^\bfL \cB)\,. \nonumber
\end{eqnarray}
Some explanations are in order: \eqref{eq:1} follows from isomorphism $l^\ast\simeq\overline{U_\loc^h}$ of Lemma~\ref{lem:auxiliar} below; \eqref{eq:2} follows from the compactness of the noncommutative motive $U_\loc(\cA)$ (see \cite[Corollary 8.7]{CT1}); and \eqref{eq:3} follows from 
 the weak equivalence
$$ \Hom_\Sp(U_\loc(\cA),U_\loc(\cB \otimes \Delta_n)) \simeq \bbK \rep_\dg(\cA, \cB \otimes \Delta_n)$$
(see \cite[Theorem~9.2]{CT1}) and from the existence of a Morita equivalence between $\rep_\dg(\cA,\cB \otimes \Delta_n)$ and $\cA^\op \otimes^\bfL (\cB \otimes \Delta_n)$ (see \cite[Lemma~5.9]{CT1}).
\begin{lemma}\label{lem:auxiliar}
The morphisms of derivators
\begin{eqnarray}\label{eq:2-morphisms}
l^\ast: \Mloc^{\bfA^{\!1}} \too \Mloc  && \overline{U_\loc^h}: \Mloc^{\bfA^{\!1}} \too \Mloc
\end{eqnarray}
are canonically isomorphic.
\end{lemma}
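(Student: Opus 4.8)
The plan is to identify both morphisms of derivators in $(\ref{eq:2-morphisms})$ as the unique homotopy-colimit-preserving extension of the $\bfA^{\!1}$-localized invariant $U_\loc^h$ along $U_\loc^{\bfA^{\!1}}$, and then invoke the uniqueness clause of the equivalence $(\ref{eq:monoidal-last})$ (equivalently $(\ref{eq:equiv-ULoc-main})$ of Theorem~\ref{thm:main}) to conclude. For $\overline{U_\loc^h}$ this is precisely its defining property: it was constructed in the proof of Theorem~\ref{thm:main2} as the morphism corresponding to the symmetric monoidal $\bfA^{\!1}$-localizing invariant $U_\loc^h \in \uHom_{\loc,\bfA^{\!1}}^\otimes(\HO(\dgcat),\Mloc)$ under $(\ref{eq:monoidal-last})$, so by construction $\overline{U_\loc^h}$ is homotopy colimit preserving and $\overline{U_\loc^h} \circ U_\loc^{\bfA^{\!1}} \simeq U_\loc^h$. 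Thus the entire content of the lemma is to show that $l^\ast$ enjoys the same two properties, and then the faithfulness of $(U_\loc^{\bfA^{\!1}})^\ast$ in $(\ref{eq:equiv-ULoc-main})$ forces $l^\ast \simeq \overline{U_\loc^h}$.

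First I would record that $l^\ast$ is homotopy colimit preserving. Since $(l_!, l^\ast)$ is the adjunction attached to a left Bousfield localization $\Mloc^{\bfA^{\!1},Q} = L_{\mathsf S,\loc}\Fun(\dgcat_f^\op,\Sp)$, the right adjoint $l^\ast$ is simply the inclusion of the $\mathsf S$-local objects; its left derived behaviour on homotopy colimits follows from the standard fact (used throughout, cf.\ \cite[\S A.7]{CT1}) that a homotopy colimit of $\mathsf S$-local objects, reflected back into $\Mloc$, agrees with the homotopy colimit computed in $\Mloc$ — more precisely, $l^\ast$ preserves homotopy colimits because $l_!$ is a homotopy-colimit-preserving localization and local objects are closed under the relevant colimits after fibrant replacement. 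Concretely I would check it on the generators $U_\loc(\cA)$, using that these are compact (\cite[Corollary~8.7]{CT1}) and that $l^\ast l_! U_\loc(\cA)$ is computed by $\mathsf S$-localizing the presheaf $\cB \mapsto \Sigma^\infty(Nw\rep_\dg(\cB,\cA)_+)$.

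Next I would compute the composite $l^\ast \circ U_\loc^{\bfA^{\!1}}$. We have $U_\loc^{\bfA^{\!1}} \simeq l_! \circ U_\loc$ by construction of the $\bfA^{\!1}$-localized model, so $l^\ast \circ U_\loc^{\bfA^{\!1}} \simeq l^\ast l_! \circ U_\loc$, which is the $\mathsf S$-localization functor $L_{\mathsf S}$ applied to $U_\loc$. It now remains to identify $L_{\mathsf S} \circ U_\loc$ with $U_\loc^h$. On one hand, $U_\loc^h(\cA) = \hocolim_n U_\loc(\cA\otimes\Delta_n)$ is $\mathsf S$-local: by Proposition~\ref{prop:A1-homot}(i) the morphism $U_\loc^h$ is $\bfA^{\!1}$-homotopy invariant, hence inverts every $U_\loc(\cB\to\cB[t])$ and — being a morphism of triangulated derivators — all their desuspensions, which are exactly the elements of $\mathsf S$; so each $U_\loc^h(\cA)$ lies in the full subcategory of $\mathsf S$-local objects. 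On the other hand, the canonical $2$-morphism $\eta: U_\loc \Rightarrow U_\loc^h$ becomes an $\mathsf S$-local weak equivalence after evaluation at every dg category: for $\cA$ a finite dg cell this is seen from the cofinality/simplicial-homotopy argument already run in the proof of Proposition~\ref{prop:A1-homot}, which exhibits $\eta_\cA$ as built from maps in $\mathsf S$; for general $\cA$ one reduces to the finite case via Lemma~\ref{lem:keyfilt} together with the fact that $U_\loc$, $U_\loc^h$ and $L_{\mathsf S}$ all preserve filtered homotopy colimits. By the universal property of left Bousfield localization this means $U_\loc^h$ is the $\mathsf S$-localization of $U_\loc$, i.e.\ $l^\ast \circ U_\loc^{\bfA^{\!1}} \simeq U_\loc^h$, as desired.

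The main obstacle will be the second half of the previous paragraph: verifying cleanly that $\eta_\cA$ is an $\mathsf S$-local equivalence for every $\cA$, rather than just $\bfA^{\!1}$-locally on finite cells. The subtlety is that $\mathsf S$ only contains the maps $\Omega^n U_\loc(\cB\to\cB[t])$ for $\cB$ a \emph{finite} dg cell, so passing from the $\Delta_\bullet$-simplicial object to the class $\mathsf S$ requires the finite-cell approximation of Lemma~\ref{lem:keyfilt} in the homotopy-compatible form $\hocolim_j(\cB_j\to\cB_j[t]) \simeq (\cA\to\cA[t])$, applied simultaneously to source and target; one then needs that the subcategory of $\mathsf S$-local objects is closed under the filtered homotopy colimits appearing here, which is where the preservation-of-filtered-colimits properties of $U_\loc$ and of the localization are essential. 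Once this is in place, both morphisms in $(\ref{eq:2-morphisms})$ are homotopy colimit preserving and restrict to $U_\loc^h$ along $U_\loc^{\bfA^{\!1}}$, so the essential injectivity of $(U_\loc^{\bfA^{\!1}})^\ast$ in Theorem~\ref{thm:main} yields the canonical isomorphism $l^\ast \simeq \overline{U_\loc^h}$.
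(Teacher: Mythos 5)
Your overall strategy---show that $l^\ast$ is homotopy colimit preserving and that $l^\ast \circ U_\loc^{\bfA^{\!1}} \simeq U_\loc^h \simeq \overline{U_\loc^h}\circ U_\loc^{\bfA^{\!1}}$, then conclude by the (fully faithful) equivalence \eqref{eq:equiv-ULoc-main}---would indeed suffice, but the first step is exactly where the substance of the lemma lies, and your justification for it is not valid. For a general left Bousfield localization the right adjoint $l^\ast$ (the inclusion of the $\mathsf{S}$-local objects) preserves homotopy \emph{limits}, not colimits; $\mathsf{S}$-local objects are \emph{not} closed under homotopy colimits unless the localization is \emph{smashing}, and that is precisely what has to be proved here. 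The paper's proof is designed to establish this: it identifies $\mathsf{L}:=\overline{U_\loc^h}\circ l_!$ with $-\otimes^\bfL \mathsf{L}^{\bfA^{\!1}}$, where $\mathsf{L}^{\bfA^{\!1}}=\hocolim_n U_\loc(\Delta_n)$, verifies the Bousfield localization axioms for $(\mathsf{L},\overline{\eta})$ using the monoidal structure and the cofinality of $\Delta\to\Delta\times\Delta$, and then compares the classes of morphisms inverted by $\mathsf{L}$ and by $l_!$; the identification $l^\ast\simeq\overline{U_\loc^h}$ (and hence colimit preservation of $l^\ast$) comes out \emph{a posteriori}. Your sentence ``local objects are closed under the relevant colimits after fibrant replacement'' assumes the conclusion; to repair it along your lines you would need an independent smashing argument, e.g.\ Miller--Neeman finite localization, which requires knowing that the sources and targets of $\mathsf{S}$, namely $U_\loc(\cB)$ and $U_\loc(\cB[t])$ for $\cB$ a finite dg cell, are compact in $\Mloc$---an input of essentially the same weight as the paper's explicit computation.

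There is a second, related gap: your argument that each $U_\loc^h(\cA)$ is $\mathsf{S}$-local is a non sequitur. That the morphism of derivators $U_\loc^h$ inverts the dg functors $\cB\to\cB[t]$ (Proposition~\ref{prop:A1-homot}(i)) is a statement about its \emph{values} on those maps; $\mathsf{S}$-locality of the object $U_\loc^h(\cA)$ is a statement about maps \emph{into} it from $\Omega^n U_\loc(\cB)$ and $\Omega^n U_\loc(\cB[t])$, and the two are not the same. You cannot argue termwise either, since the objects $U_\loc(\cA\otimes\Delta_n)$ are not $\mathsf{S}$-local and closure of local objects under the filtered homotopy colimit is again the smashing property you have not yet established. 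A correct verification would pass through compactness of $U_\loc(\cB)$, $U_\loc(\cB[t])$ and the co-representability theorem of \cite{CT1}, so as to rewrite $\Hom_\Sp(U_\loc(\cB[t]),\hocolim_n U_\loc(\cA\otimes\Delta_n))\to\Hom_\Sp(U_\loc(\cB),\hocolim_n U_\loc(\cA\otimes\Delta_n))$ as a map of $KH$-type spectra and invoke $\bfA^{\!1}$-homotopy invariance there. By contrast, your verification that $\eta_\cA$ is an $\mathsf{S}$-local equivalence (via Lemma~\ref{lem:keyfilt} and preservation of filtered homotopy colimits) is fine. In short: the skeleton of your reduction is sound, but the two pivotal claims---colimit preservation of $l^\ast$ and $\mathsf{S}$-locality of $U_\loc^h(\cA)$---are asserted rather than proved, and proving them is essentially equivalent to the smashing-localization argument that constitutes the paper's proof.
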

\begin{proof}
Consider the endomorphism $\mathsf{L}:= \overline{U_\loc^h} \circ l_!$ of $\Mloc$. Thanks to equivalence~\eqref{eq:equiv-Uloc}, the $2$-morphism $\eta: U_\loc \Rightarrow U_\loc^h$ extends to a $2$-morphism $\overline{\eta}: \Id \Rightarrow \mathsf{L}$. Consider the noncommutative motive $\mathsf{L}^{\bfA^{\!1}}:=\mathrm{hocolim}_n\, U_\loc(\Delta_n) \in \Mloc$. We claim that $\mathsf{L}(-)\simeq -\otimes^\bfL \mathsf{L}^{\bfA^{\!1}}$. Since these two endomorphisms preserve homotopy colimits and $\Mloc$ is generated by the noncommutative motives of the form $U_\loc(\cA)$, with $\cA$ a dg category, it suffices to show that $\mathsf{L}(U_\loc(\cA))\simeq U_\loc(\cA) \otimes^\bfL \mathsf{L}^{\bfA^{\!1}}$. This follows from the isomorphisms
\begin{eqnarray*}
\mathsf{L}(U_\loc(\cA)) & \simeq & U^h_\loc(\cA)  :=\mathrm{hocolim}_n U_\loc(\cA\otimes \Delta_n) \\
& \simeq & \mathrm{hocolim}_n (U_\loc(\cA) \otimes^\bfL U_\loc(\Delta_n))  \\
& \simeq & U_\loc(\cA) \otimes^\bfL \mathrm{hocolim}_n U_\loc(\Delta_n) =  U_\loc(\cA) \otimes^\bfL \mathsf{L}^{{\bf A\!}^1}\,.
\end{eqnarray*}
Under this identification, the evaluation of the $2$-morphism $\overline{\eta}$ at the noncommutative motive $U_\loc(\cA)$ corresponds to the following composition
$$ U_\loc(\cA) \stackrel{r}{\too} U_\loc(\cA) \otimes^{\bfL} U_\loc(k) \stackrel{\id\otimes \iota}{\too} U_\loc(\cA) \otimes^{\bfL} \mathsf{L}^{\bfA^{\!1}}\,,$$
where $r$ is the right isomorphism constraint and $\iota$ the canonical map. Let us now prove that the couple $(\mathsf{L}, \overline{\eta})$ defines a left Bousfield localization of $\Mloc$, \ie that the natural transformations $\mathsf{L}\overline{\eta}$ and $\overline{\eta}_\mathsf{L}$ are not only equal but moreover isomorphisms. Once again, since $\Mloc$ is generated by the noncommutative motives of the form $U_\loc(\cA)$, with $\cA$ a dg category, it suffices to show that the morphisms 
\begin{eqnarray*}
&U_\loc(\cA) \otimes^\bfL \mathsf{L}^{\bfA^{\!1}} \stackrel{r\otimes \id}{\too} U_\loc(\cA) \otimes^\bfL U_\loc(k)\otimes^\bfL \mathsf{L}^{\bfA^{\!1}} \stackrel{\id \otimes \iota \otimes \id}{\too} U_\loc(\cA) \otimes^\bfL \mathsf{L}^{\bfA^{\!1}} \otimes^\bfL \mathsf{L}^{\bfA^{\!1}}&\\
&U_\loc(\cA) \otimes^\bfL \mathsf{L}^{\bfA^{\!1}} \stackrel{\id\otimes r}{\too} U_\loc(\cA) \otimes^\bfL \mathsf{L}^{\bfA^{\!1}} \otimes^\bfL U_\loc(k) \stackrel{\id \otimes \id \otimes \iota}{\too} U_\loc(\cA) \otimes^\bfL \mathsf{L}^{\bfA^{\!1}} \otimes^\bfL \mathsf{L}^{\bfA^{\!1}} &
\end{eqnarray*}
are not only equal but moreover isomorphisms. The later claim follows from the isomorphisms $ \iota \otimes \id$ and $\id \otimes \iota$, which in turn follows from the cofinality of the maps $\Delta \stackrel{\id \times 0}{\too} \Delta \times  \Delta$ and $\Delta \stackrel{0 \times \id}{\too} \Delta \times \Delta$. On the other hand, the former claim follows from the commutativity of the following diagram
$$
\xymatrix{
U_\loc(\cA) \otimes^\bfL \mathsf{L}^{\bfA^{\!1}} \ar[r]^-{r\otimes \id} \ar@{=}[d]  & U_\loc(\cA) \otimes^\bfL U_\loc(k)\otimes^\bfL \mathsf{L}^{\bfA^{\!1}} \ar[r]^-{\id \otimes \iota \otimes \id} & U_\loc(\cA) \otimes^\bfL \mathsf{L}^{\bfA^{\!1}} \otimes^\bfL \mathsf{L}^{\bfA^{\!1}} \ar@{=}[d] \\
U_\loc(\cA) \otimes^\bfL \mathsf{L}^{\bfA^{\!1}} \ar[r]_-{\id\otimes r} & U_\loc(\cA) \otimes^\bfL \mathsf{L}^{\bfA^{\!1}} \otimes^\bfL U_\loc(k) \ar[r]_-{\id \otimes \id \otimes \iota}  \ar[u]^{\id \otimes \tau}_\sim & U_\loc(\cA) \otimes^\bfL \mathsf{L}^{\bfA^{\!1}} \otimes^\bfL \mathsf{L}^{\bfA^{\!1}}\,,
}
$$
where $\tau$ is the symmetry isomorphism constraint. Now, in order to prove that the morphisms \eqref{eq:2-morphisms} are isomorphic, it suffices by the general formalism of left Bousfield localization to show the following: a morphism in $\Mloc$ becomes an isomorphism after application of $\mathsf{L}$ if and only if it becomes an isomorphism after application of $l_!$. For this purpose it is enough to consider the morphisms $\overline{\eta}$. Once again, since $\mathsf{L}$ and $l_!$ are symmetric monoidal and homotopy colimit preserving, and $\Mloc$ is generated by the noncommutative motives of the form $U_\loc(\cA)$, with $\cA$ a dg category, we can restrict ourselves to the morphism $l_!(U_\loc(k) \to \mathrm{hocolim}_n\, U_\loc(\Delta_n))$. This is clearly an isomorphism since $U_\loc^{\bfA^{\!1}}=l_! \circ U_\loc$ is $\bfA^{\!1}$-homotopy invariant.
\end{proof}

Let us now prove the weak equivalence \eqref{eq:star-4}. Consider the distinguished triangle
$$ \Omega U^{{\bf A\!}^1}_\loc(\cA) \too l\backslash U^{{\bf A\!}^1}_\loc(\cA)  \too U^{{\bf A\!}^1}_\loc(\cA) \stackrel{\cdot l}{\too} U^{{\bf A\!}^1}_\loc(\cA)\,.$$
By applying to it the contravariant functor $\Hom_\Sp(-,U^{{\bf A\!}^1}_\loc(\cB))$ and using the weak equivalence \eqref{eq:star-3}, we obtain the following distinguished triangle of spectra
\begin{equation*}
KH(\cA^\op \otimes^\bfL \cB) \stackrel{\cdot l}{\to} KH(\cA^\op \otimes^\bfL \cB) \to \Hom_\Sp(l \backslash U^{{\bf A\!}^1}_\loc(\cA), U^{{\bf A\!}^1}_\loc(\cB)) \to \Sigma KH(\cA^\op \otimes^\bfL \cB)\,.
\end{equation*}
This triangle implies that $\Hom_\Sp(l \backslash U^{{\bf A\!}^1}_\loc(\cA), U^{{\bf A\!}^1}_\loc(\cB))$ is  the mod-$l$ Moore object of $KH(\cA^\op \otimes^\bfL \cB)$. Now, recall from \S\ref{sub:KH} that $KH(\cA^\op \otimes^\bfL \cB; \bbZ/l)$ is defined as $K(\cA^\op \otimes^\bfL \cB) \wedge^\bfL \bbS/l$. Using the distinguished triangle $\bbS \stackrel{\cdot l}{\to} \bbS \to \bbS/l \to \Sigma \bbS$, we conclude that $KH(\cA^\op \otimes^\bfL \cB;\bbZ/l)$ is also the mod-$l$ Moore object of $KH(\cA^\op \otimes^\bfL \cB)$. This achieves the proof of Theorem \ref{thm:main2}.
\section{Proof of Corollary \ref{cor:main}}
Recall from \S\ref{sub:smooth} that since by assumption $X$ is a smooth proper $k$-scheme, the dg category $\perf_\dg(X)$ is smooth and proper. Hence, Theorem~\ref{thm:main2} (with $\cA=\perf_\dg(X)$ and $\cB=\perf_\dg(Y)$) gives rise to the weak equivalence
$$\Hom_\Sp(U_\loc^{{\bf A\!}^1}(\perf_\dg(X)), U_\loc^{{\bf A\!}^1}(\perf_\dg(Y))) \simeq  KH(\perf_\dg(X)^\op \otimes^\bfL \perf_\dg(Y))\,. $$
Thanks to \cite[Proposition\footnote{In {\em loc. cit.} we assumed $X$ and $Y$ to be separated. However, the same result holds with $X$ and $Y$ quasi-separated.}~8.2]{Regularity} (with $E=KH$) and the Morita equivalence $\perf_\dg(X)^\op \simeq \perf_\dg(X)$, one concludes that the right-hand-side identifies with $KH(\perf_\dg(X \times Y))$. The proof follows now from Proposition \ref{prop:agreementKH}(ii).
\section{Proof of Theorem \ref{thm:main22}}
Let $\overline{KV}, \overline{E}: \Mot_\add^{{\bf A\!}^1} \to \HO(\Sp)$ be the homotopy colimit preserving morphisms of derivators associated to $KV,E$ under equivalence \eqref{eq:equiv-UAdd-main}. Note that $\Nat_\Sp(KV,E) \simeq \Nat_\Sp(\overline{KV},\overline{E})$. Now, consider the following sequence of weak equivalences
$$ \Nat_\Sp(\overline{KV},\overline{E}) \simeq \Nat_\Sp(\Hom_\Sp(U_\add^{{\bf A\!}^1}(k),-), \overline{E}) \simeq \overline{E}(k) \simeq E(k)\,.$$
The first one follows from Theorem~\ref{thm:main2} (with $\cA=k$), the second one follows from the $\Sp$-enriched Yoneda lemma, and the third one follows from $\overline{E} \circ U_\loc^{{\bf A\!}^1} \simeq E$. This implies the left-hand-side of \eqref{eq:nat-1}. The right-hand-side is obtained by applying the functor $\pi_0(-)$. Finally, the proof of the localizing case is similar
\section{Proof of Theorem~\ref{thm:main3}}\label{sec:proof-main3}
Let $ch(A): K(A) \to HP(A)$ be the classical Chern character from the algebraic $K$-theory of $A$ to the periodic cyclic homology of $A$. Consider the induced map
\begin{equation}\label{eq:induced-map}
\mathrm{hocolim}_n (K(\Delta_nA) \stackrel{ch(\Delta_n A)}{\too} HP(\Delta_n A))\,,
\end{equation}
where $\Delta_nA:= A[t_0, \ldots, t_n]/(\sum_{i=0}^n t_i -1)A$. As explained in the proof of Proposition \ref{prop:agreementKH}(i), the left-hand-side of \eqref{eq:induced-map} identifies with $KV(A)$. On the other hand, since $HP$ is ${\bf A\!}^1$-homotopy invariant, the right-hand-side identifies with $HP(A)$. Weibel's homotopy Chern characters $KV_n(A) \to HP_n(A), n \geq 1$, are obtained from \eqref{eq:induced-map} by applying the (stable) homotopy group functors $\pi_n(-), n \geq 1$; see \cite[\S5]{Weibel-Nil}.

Now, consider the following commutative diagram
\begin{equation}\label{eq:diagram-big}
\xymatrix{
\HO(\dgcat) \ar[d]^-{U_\add} \ar@/_2pc/[dd]_-{U_\add^{{\bf A\!}^1}} \ar[rr]^-{HP^{\mathrm{flt}}} && \HO(\Sp) \\
\Madd \ar@/_0.7pc/[urr]_-{\overline{HP^{\mathrm{flt}}}} \ar[d]^-{l_!} && \\
\Madd^{{\bf A\!}^1} \ar@/_1pc/[uurr]_-{\overline{\overline{HP^{\mathrm{flt}}}}} && \,,
}
\end{equation}
where $\overline{HP^{\mathrm{flt}}}$ and $\overline{\overline{HP^{\mathrm{flt}}}}$ are the homotopy colimit preserving morphism of derivators induced from (the additive version of) \eqref{eq:equiv-Uloc} and \eqref{eq:equiv-UAdd-main}, respectively. Note that the composition $ch^{{\bf A\!}^1}(\cA):KV(\cA) \to HP^{\mathrm{flt}}(\cA) \stackrel{\epsilon}{\to} HP(\cA)$ identifies with
$$
\Hom_\Sp(U_\add^{{\bf A\!}^1}(k), U_\add^{{\bf A\!}^1}(\cA)) \to \Hom_\Sp(HP(k),HP^{\mathrm{flt}}(\cA)) \to \Hom_\Sp(HP(k),HP(\cA))\,,
$$
where the left-hand-side map is induced by $\overline{\overline{HP^{\mathrm{flt}}}}$ and the right-hand-side one by the counit $2$-morphism $\epsilon$. Since $\Madd^{{\bf A\!}^1}$ is a left Bousfield localization of $\Madd$, we have by adjunction and compactness of $U_\add(k)$ the following weak equivalences
\begin{eqnarray*}
\Hom_\Sp(U_\add^{{\bf A\!}^1}(k), U_\add^{{\bf A\!}^1}(\cA)) & \simeq& \Hom_\Sp(U_\add(k), \mathrm{hocolim}_n U_\add (\cA\otimes \Delta_n))\\
& \simeq & \mathrm{hocolim}_n \Hom_\Sp(U_\add(k),U_\add(\cA \otimes \Delta_n))\,.
\end{eqnarray*}
On the other hand, since $HP^{\mathrm{flt}}$ and $HP$ are ${\bf A\!}^1$-homotopy invariant, we have
\begin{eqnarray*}
\Hom_\Sp(HP(k),HP^{\mathrm{flt}}(\cA)) & \simeq & \mathrm{hocolim}_n\Hom_\Sp(HP(k), HP^{\mathrm{flt}}(\cA \otimes \Delta_n)) \\
\Hom_\Sp(HP(k),HP(\cA)) & \simeq & \mathrm{hocolim}_n \Hom_\Sp(HP(k),  HP(\cA \otimes \Delta_n)) \,.
\end{eqnarray*}
As a consequence, $ch^{{\bf A\!}^1}(\cA)$ identifies with
\begin{equation}\label{eq:induced-map-1}
\mathrm{hocolim}_n (\Hom_\Sp(U_\add(k),U_\add(\cA \otimes \Delta_n)) \to \Hom_\Sp(HP(k), HP(\cA\otimes \Delta_n)))\,,
\end{equation}
where the maps are now induced by $\overline{HP^{\mathrm{flt}}}$ and $\epsilon$. Let us now prove that \eqref{eq:induced-map-1}=\eqref{eq:induced-map} when $\cA=A$. This clearly achieves the proof. In order to do so, consider the following commutative diagram
$$
\xymatrix{
\HO(\dgcat) \ar[d]_-{U_\add} \ar[rr]^-{P \circ M} && \HO(k[u]\text{-}\mathrm{Comod}) \ar[rr]^-{\Hom_\Sp(k[u],-)} && \HO(\Sp) \\
\Madd \ar[urr]_-{\overline{P \circ M}} & && & \,,
}
$$
where $\overline{P \circ M}$ is the homotopy colimit preserving morphism of derivators induced from (the additive version of) \eqref{eq:equiv-Uloc}. Recall from \S\ref{sec:HP} that the upper horizontal composition is $HP$. Given a dg category $\cA$, consider the composition of the map
\begin{equation}\label{eq:comp1}
\Hom_\Sp(U_\add(k),U_\add(\cA \otimes \Delta_n)) \too \Hom_\Sp(k[u],(P\circ M)(\cA \otimes \Delta_n))
\end{equation}
induced by $\overline{P \circ M}$ with the map
\begin{equation}\label{eq:comp2}
\Hom_\Sp(k[u],(P\circ M)(\cA \otimes \Delta_n)) \too \Hom_\Sp(HP(k),HP(\cA\otimes \Delta_n))
\end{equation}
induced by $\Hom_\Sp(k[u],-)$. As proved in \cite[Theorem~2.8]{Products} \cite[\S5]{Criterion}, the composition $\eqref{eq:comp2} \circ \eqref{eq:comp1}$ agrees with the Chern character $ch(\Delta_nA) : K(\Delta_nA) \to HP(\Delta_nA)$ when $\cA=A$. Hence, in order to prove the equality \eqref{eq:induced-map-1}=\eqref{eq:induced-map}, it suffices to show that the following diagram is commutative (up to weak equivalence)
\begin{equation}\label{eq:square-key}
\xymatrix{
\Hom_\Sp(U_\add(k),U_\add(\cA \otimes \Delta_n)) \ar[d] \ar[r]^-{\eqref{eq:comp1}} & \Hom_\Sp(k[u],(P \circ M) (\cA \otimes \Delta_n)) \ar[d]^-{\eqref{eq:comp2}} \\
\Hom_\Sp(HP(k),HP^{\mathrm{flt}}(\cA \otimes \Delta_n)) \ar[r] & \Hom_\Sp(HP(k),HP(\cB \otimes \Delta_n))\,,
}
\end{equation} 
where the left vertical map is induced by $\overline{HP^f}$ and the bottom horizontal map by $\epsilon$. Let us assume first that $\cA$ is homotopically finitely presented. Since the $k$-algebra $\Delta_n$ (considered as a dg category) is clearly homotopically finitely presented, $\cA \otimes \Delta_n$ is also homotopically finitely presented; see \cite[Theorem~4.4]{CT1}. Hence, thanks to Proposition~\ref{prop:general}(i), the bottom horizontal map is an isomorphism. We now claim that, via the adjunction \eqref{eq:desired-adj} below, we have a $2$-isomorphism
$$ \Psi(\Hom_\Sp(k[u],-) \circ \overline{P \circ M}) \simeq \overline{HP^{\mathrm{flt}}}\,.$$
Thanks to equivalence \eqref{eq:equiv-final-2} and adjunction \eqref{eq:desired-adj1}, this follows from the fact that $\Hom_\Sp(k[u],-) \circ \overline{P \circ M}$ and $\overline{HP^{\mathrm{flt}}}$ agree with $HP$ when precomposed with $h: \dgcat_f[S^{-1}] \to \Madd$ and from the fact that $\overline{HP^{\mathrm{flt}}}$ is homotopy colimit preserving. Making use of Proposition~\ref{prop:aux}, we then conclude that \eqref{eq:square-key} is commutative. Let us now assume that $\cA$ is an arbitrary dg category. As proved in \cite[Proposition~3.6(iii)]{CT}, there exists a filtered direct system of finite dg cells $\{\cB_j\}_{j\in J}$ such that $\mathrm{hocolim}_j \cB_j \simeq \cA$. Consequently, we have the weak equivalences
\begin{eqnarray*}
\Hom_\Sp(U_\add(k),U_\add(\cA \otimes \Delta_n)) &\simeq & \Hom_\Sp(U_\add(k),U_\add(\mathrm{hocolim}_j \cB_j \otimes \Delta_n))  \\
& \simeq & \Hom_\Sp(U_\add(k),\mathrm{hocolim}_j U_\add( \cB_j \otimes \Delta_n)) \\
& \simeq & \mathrm{hocolim}_j \Hom_\Sp(U_\add(k), U_\add( \cB_j \otimes \Delta_n))\,.
\end{eqnarray*}
Therefore, in order to prove that \eqref{eq:square-key} is commutative, it suffices to show that its precomposition with the maps 
\begin{eqnarray*}
\Hom_\Sp(U_\add(k),U_\add(\cB_j\otimes \Delta_n)) \too \Hom_\Sp(U_\add(k),U_\add(\cA \otimes \Delta_n)), && j \in J
\end{eqnarray*}
is commutative. This follows automatically from the functoriality of diagram \eqref{eq:square-key} on $\cA$ and from the previous case.  
\begin{proposition}\label{prop:aux}
Given any triangulated derivator $\bbD$, one has an adjunction
\begin{equation}\label{eq:desired-adj}
\xymatrix{
\uHom(\Madd,\bbD) \ar@<1ex>[d]^-{\Psi} \\
*+<3ex>{\uHom_!(\Madd,\bbD)} \ar@{^{(}->}@<1ex>[u]
}
\end{equation}
Given $E' \in \uHom(\Madd,\bbD)$, the evaluation of the counit $2$-morphism $\Psi(E') \Rightarrow E'$ at every homotopically finitely presented dg category is an isomorphism.
\end{proposition}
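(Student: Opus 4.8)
The plan is to mimic, for additive noncommutative motives, the construction carried out in the proof of Proposition~\ref{prop:general}, with $\Madd$ now playing the role that $\HO(L_S\Fun(\dgcat_f^\op,\sSet))$ played there. What makes this possible is that $\Madd$ itself admits a Quillen model which is a left Bousfield localization of a presheaf category on $\dgcat_f$: by the additive analogue of Remark~\ref{rk:Qmodel} (see \cite{Additive}), $\Madd$ admits a left proper cellular Quillen model $\Madd^Q$ obtained from $\Fun(\dgcat_f^\op,\Sp)$ by left Bousfield localizing at a set of maps implementing the additive property, and $U_\add$ is induced by $\cA \mapsto (\cB \mapsto \Sigma^\infty(Nw\rep_\dg(\cB,\cA)_+))$. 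Write $i:\dgcat_f[S^{-1}] \to \HO(\dgcat)$ for the canonical morphism and set $h:= U_\add \circ i : \dgcat_f[S^{-1}] \to \Madd$ (equivalently, $h$ is the Yoneda-type embedding into the presheaf derivator followed by the localization). Then $h$ is fully faithful and its essential image generates $\Madd$ under homotopy colimits.

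The first step is to establish the structural equivalence
\begin{equation*}
h^\ast:\ \uHom_!(\Madd,\bbD) \stackrel{\sim}{\too} \uHom_{\add}(\dgcat_f[S^{-1}],\bbD)\,,
\end{equation*}
where the right-hand side denotes the full subcategory of $\uHom(\dgcat_f[S^{-1}],\bbD)$ of those morphisms that invert the additive relations among finite dg cells. This is obtained exactly as equivalences \eqref{eq:equivalence-key} and \eqref{eq:equivalence-key2} were obtained: one combines \cite[Theorem~3.1]{Additive}, which identifies homotopy colimit preserving morphisms out of the presheaf derivator $\HO(\Fun(\dgcat_f^\op,\Sp))$ with arbitrary morphisms out of $\dgcat_f[S^{-1}]$, with the theory of left Bousfield localization (\cite[\S A.7]{CT1}) applied to the set of maps implementing additivity.

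Next I would construct the right adjoint $\Psi$. Given $E' \in \uHom(\Madd,\bbD)$, the restriction $E'\circ h$ automatically inverts the additive relations — they already become isomorphisms in $\Madd$ — so it defines an object of $\uHom_{\add}(\dgcat_f[S^{-1}],\bbD)$, and I set $\Psi(E'):=\overline{E'\circ h}$, the essentially unique homotopy colimit preserving morphism with $\Psi(E')\circ h \simeq E'\circ h$ supplied by the equivalence above. Since the essential image of $h$ generates $\Madd$ under homotopy colimits, writing $M \simeq \hocolim_\alpha h(\cA_\alpha)$ over small diagrams of finite dg cells, the canonical maps $\hocolim_\alpha E'(h(\cA_\alpha)) \to E'(\hocolim_\alpha h(\cA_\alpha))$ assemble into a $2$-morphism $\Psi(E') \Rightarrow E'$; that this is the counit of an adjunction with the inclusion $\uHom_!(\Madd,\bbD)\hookrightarrow\uHom(\Madd,\bbD)$ as left adjoint is then formal, being the content of \cite[Lemma~3.2]{Additive} applied with $\Fun(\dgcat_f^\op,\Sp)$ localized at the additive maps in place of $L_S\Fun(\dgcat_f^\op,\sSet)$. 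This yields the adjunction \eqref{eq:desired-adj}. For the remaining assertion: by construction $\Psi(E')\circ h \simeq E'\circ h$, so, exactly as in the proof of Proposition~\ref{prop:general} (using $h = U_\add\circ i$), the evaluation of the counit $\Psi(E')\Rightarrow E'$ at $U_\add(\cA)$ is an isomorphism for every $\cA\in\dgcat_f$, i.e. for every finite dg cell. Since by \cite[Proposition~5.2]{Additive} every homotopically finitely presented dg category is a retract, in $\Ho(\dgcat)$, of a finite dg cell, applying $U_\add$ and using that the property of a $2$-morphism of morphisms of derivators being an isomorphism passes to retracts, we conclude that the counit is an isomorphism at $U_\add(\cA)$ for every homotopically finitely presented $\cA$.

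I expect the main obstacle to be the first step: checking carefully that $\Madd$, as a left Bousfield localization of the presheaf (pre)derivator on $\dgcat_f$, still enjoys the universal property ``homotopy colimit preserving morphisms out of $\Madd$ correspond to additive-relation-inverting morphisms out of $\dgcat_f[S^{-1}]$'', so that $\Psi = \overline{(-)\circ h}$ is genuinely right adjoint to the inclusion. Once this is secured, the construction of the counit, its behaviour on finite dg cells, and the passage to retracts are routine and parallel the proof of Proposition~\ref{prop:general} step by step.
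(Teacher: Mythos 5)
Your proposal follows the paper's own proof essentially step by step: the same additive Quillen model $L_{\add}\!\Fun(\dgcat_f^\op,\Sp)$ of $\Madd$, the same equivalence between $\uHom_!(\Madd,\bbD)$ and a subcategory of $\uHom(\dgcat_f[S^{-1}],\bbD)$ obtained from \cite[Theorem~3.1]{Additive} plus the theory of left Bousfield localizations (the paper identifies that subcategory concretely as the morphisms sending split short exact sequences to direct sums, via \cite[Proposition~13.2]{Additive}), the same definition $\Psi(E'):=\overline{E'\circ h}$ yielding the adjunction as in \cite[Lemma~3.2]{Additive}, and the same finite-dg-cell-plus-retract argument for the counit statement. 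So the argument is correct and takes essentially the same route as the paper.
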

\begin{proof}
Recall first from (the additive version of) Remark~\ref{rk:Qmodel} that $\Madd$ admits a Quillen model $\Madd^Q$ given by $L_\add\!\Fun(\dgcat_f^\op,\Sp)$, where $\add$ is a set of morphisms implementing the additive property. When $\bbD$ is a triangulated derivator, the equivalence \eqref{eq:equivalence-key} (with $\sSet$ replaced by $\Sp$)
\begin{equation}\label{eq:equiv-final-1}
h^\ast: \uHom_!(\HO(L_S\!\Fun(\dgcat_f^\op, \Sp)),\bbD) \stackrel{\sim}{\too} \uHom(\dgcat_f[S^{-1}],\bbD) 
\end{equation}
holds also; see \cite[Theorem~3.1 and \S8]{Additive}. By further localizing $L_S\!\Fun(\dgcat_f^\op,\Sp)$ with respect to $\add$, we obtain the Quillen model $\Madd^Q$. Since every split short exact sequence of dg categories is Morita equivalent to a filtered homotopy colimit of split short exact sequences whose components are finite dg cells (see \cite[Proposition~13.2]{Additive}), \eqref{eq:equiv-final-1} give then rise to the following equivalence
\begin{equation}\label{eq:equiv-final-2}
h^\ast: \uHom_!(\Madd,\bbD) \stackrel{\sim}{\too} \uHom_{\mathsf{sses}}(\dgcat_f[S^{-1}],\bbD) \,,
\end{equation}
where the right-hand-side denotes the category of morphisms of derivators that send {\bf s}plit {\bf s}hort {\bf e}xact {\bf s}equences of dg categories to direct sums. As in \eqref{eq:adj-2}, we obtain then the following adjunction 
\begin{equation}\label{eq:desired-adj1}
\xymatrix{
\uHom(\Madd, \bbD) \ar@<1ex>[d]^-{\Psi}  & E' \ar@{|->}[d]\\
*+<3ex>{\uHom_!(\Madd, \bbD)} \ar@{^{(}->}@<1ex>[u] & \Psi(E'):= \overline{E' \circ h}\,,
}
\end{equation}
where $\overline{E' \circ h}$ is the unique homotopy colimit preserving morphism of derivators corresponding to $E' \circ h$ under the above equivalence \eqref{eq:equiv-final-2}. This establishes the desired adjunction \eqref{eq:desired-adj}. The second claim is now clear from the construction of the right adjoint $\Psi$ and from the fact that every homotopically finitely presented dg category is a retract (in the homotopy category $\Ho(\dgcat)$) of a finite dg cell.
\end{proof}

\appendix

\section{Grothendieck derivators}\label{sub:derivators}
The theory of derivators allow us to state and prove precise universal properties. The original reference is Grothendieck's manuscript \cite{Grothendieck}; consult the Appendices of \cite{CT1,CT} for shorter and more didactic accounts. Roughly speaking, a derivator $\bbD$ consists of a
strict contravariant $2$-functor from the $2$-category $\Cat$ of small
categories to the $2$-category $\CAT$ of all categories
\begin{eqnarray*}
\bbD: \Cat^\op \longrightarrow \CAT && I \mapsto \bbD(I)
\end{eqnarray*}
subject to several natural axioms. The
essential example to keep in mind is the derivator $\bbD=\HO(\cM)$
associated to a Quillen model category~$\cM$
and defined for every small category~$I$ by $\HO(\cM)(I):=\Ho(\Fun(I^\op,\cM))$.
Let $e$ be the $1$-point category with only one object and one
identity morphism. By definition, $\bbD(e)$ is called the {\em base category} of the derivator $\bbD$. Heuristically, it is the
basic ``derived" category under consideration. For instance, if $\bbD=\HO(\cM)$ then
$\bbD(e)=\Ho(\cM)$. 

A derivator $\bbD$ is called {\em triangulated} if $\bbD(I)$ is a triangulated category for every small category $I$. For example, the derivator $\HO(\cM)$ associated to a stable Quillen model category $\cM$ is triangulated. As explained in \cite[\S A.1]{CT}, every triangulated derivator $\bbD$ is naturally enriched $\Hom_\Sp(-,-)$ over spectra. Given derivators $\bbD,\bbD'$, we will write $\uHom(\bbD,\bbD')$ for the category of morphisms of derivators, $\uHom_{\mathrm{flt}}(\bbD,\bbD')$ for the full subcategory of filtered homotopy colimit preserving morphisms of derivators, and $\uHom_!(\bbD,\bbD')$ for the full subcategory of homotopy colimit preserving morphisms of derivators. Finally, given morphisms of derivators $E,E': \bbD \to \bbD'$, with $\bbD'$ triangulated, we will write $\Nat_\Sp(E,E')$ for the spectrum of natural transformations (\ie $2$-morphisms) and $\Nat(E,E')$ for $\pi_0 \Nat_\Sp(E,E')$.

\end{document}